\documentclass[reqno]{amsart}
\usepackage{mathrsfs}
\usepackage{color}
\usepackage{amsmath}
\usepackage{amsfonts}
\usepackage{amssymb}
\usepackage{graphicx}


 \newtheorem{Theorem}{Theorem}[section]
 \newtheorem{Corollary}[Theorem]{Corollary}
 \newtheorem{Lemma}[Theorem]{Lemma}

\newtheorem{Question}[Theorem]{Question}

 \newtheorem{Remark}[Theorem]{Remark}

 \numberwithin{equation}{section}



\begin{document}

\title[Modules at boundary points, fiberwise Bergman kernels \uppercase\expandafter{\romannumeral2}]
{Modules at boundary points, fiberwise Bergman kernels, and log-subharmonicity \uppercase\expandafter{\romannumeral2}-- on Stein manifolds}
\author{Shijie Bao}
\address{Institute of Mathematics, Academy of Mathematics
and Systems Science, Chinese Academy of Sciences, Beijing 100190, China.}
\email{bsjie@pku.edu.cn}

\author{Qi'an Guan}
\address{Qi'an Guan: School of
Mathematical Sciences, Peking University, Beijing 100871, China.}
\email{guanqian@math.pku.edu.cn}

\subjclass[2010]{32D15 32E10 32L10 32U05 32W05}

\thanks{}

\keywords{Bergman kernels, $L^2$ extension, strong openness property}

\date{\today}

\dedicatory{}

\commby{}

\maketitle
\begin{abstract}
In this article, we consider Bergman kernels related to modules at boundary points on Stein manifolds and obtain a log-subharmonicity property of the Bergman kernels. 
As applications, we obtain a lower estimate of weighted $L^2$ integrals on Stein manifolds and reprove an effectiveness result of the strong openness property of modules at boundary points on Stein manifolds.
\end{abstract}

\section{Introduction}

The strong openness property of multiplier ideal sheaves (i.e. $\mathcal{I}(\psi)=\mathcal{I}_+(\psi):=\mathop{\cup} \limits_{\epsilon>0}\mathcal{I}((1+\epsilon)\psi)$) has opened the door to new types of approximations, 
which has been widely used in the study of several complex variables, complex algebraic geometry, and complex differential geometry
(see e.g. \cite{GZSOC,K16,cao17,cdM17,FoW18,DEL18,ZZ2018,GZ20,berndtsson20,ZZ2019,ZhouZhu20siu's,FoW20,KS20,DEL21}),
where $\psi$ is a plurisubharmonic function on a complex manifold $M$ (see \cite{Demaillybook}) and the multiplier ideal sheaf $\mathcal{I}(\psi)$ is the sheaf of germs of holomorphic functions $f$ such that $|f|^2e^{-\psi}$ is locally integrable (see e.g. \cite{Tian,Nadel,Siu96,DEL,DK01,DemaillySoc,DP03,Lazarsfeld,Siu05,Siu09,DemaillyAG,Guenancia}).

The strong openness property is a longstanding problem, which was conjectured by Demailly \cite{DemaillySoc,DemaillyAG} and proved by Guan-Zhou \cite{GZSOC} (the 2-dimensional case was proved by Jonsson-Musta\c{t}\u{a} \cite{JM12}). 
Recall that Jonsson and Musta\c{t}\u{a} (see \cite{JM13}, see also \cite{JM12}) posed the following conjecture to prove the strong openness property, and proved the 2-dimensional case \cite{JM12}, which deduces the 2-dimensional strong openness property:

\textbf{Conjecture J-M}: If $c_o^F(\psi)<+\infty$, $\frac{1}{r^2}\mu(\{c_o^F(\psi)\psi-\log|F|<\log r\})$ has a uniform positive lower bound independent of $r\in(0,1)$,
where $\mu$ is the Lebesgue measure on $\mathbb{C}^n$, and $c_o^{F}(\psi):=\sup\{c\geq 0 : |F|^2e^{-2c\psi}$ is locally $L^1$ near $o \}$.

Recall that Guan-Zhou \cite{GZeff} proved Conjecture J-M by using the strong openness property. It is natural to ask: 

\begin{Question}
Can one complete the approach from Conjecture J-M to the strong openness property?
\end{Question}

Bao-Guan-Yuan \cite{BGY} (see also \cite{GMY-BC2}) gave an affirmative answer to the above question by establishing a concavity property of the minimal $L^{2}$ integrals
with respect to a module at a boundary point of the sublevel sets.

In \cite{BG-BB}, we considered Bergman kernels related to modules at boundary points on pseudoconvex domains, and obtained a log-subharmonicity property of the Bergman kernels,
which deduces a new approach from Conjecture J-M to the strong openness property.

In this article, we consider Bergman kernels related to modules at boundary points on Stein manifolds, and obtain the log-subharmonicity property of the Bergman kernels (the case of inner points can be referred to \cite{BG1, BG2}).
As applications, we obtain a lower estimate of weighted $L^2$ integrals on sub-level sets on Stein manifolds, 
and reprove an effectiveness result of strong openness property of modules at boundary points on Stein manifolds.

\subsection{Main result}\label{main}
\
Let $M$ be an $n-$dimensional Stein manifold, and let $K_M$ be the canonical (holomorphic) line bundle on $M$. Let $\psi$ be a plurisubharmonic function on $M$. Let $F\not\equiv 0$ be a holomorphic function on $M$, and let $T\in [-\infty,+\infty)$. Denote that
\[\Psi:=\min\{\psi-2\log |F|,-T\}.\]
If $F(z)=0$ for some $z\in M$, set $\Psi(z)=-T$. For any $t\in [T,+\infty)$, denote that
\[M_t:=\{z\in M : \Psi(z)<-t\}.\]
Note that for any $t\geq T$, $M_t=\{\psi+2\log|1/F|<-t\}$ on $M\setminus\{F=0\}$. Hence $M_t$ is a Stein submanifold of $M$ for any $t\geq T$ (see \cite{FN80}) , and $\Psi=\psi+2\log|1/F|$ is a plurisubharmonic function on $M_t$.

Let $\varphi$ be a plurisubharmonic function on $M$. For any $t\geq T$, denote that
\[A^2(M_t,e^{-\varphi}):=\{f\in H^0(M_t,\mathcal{O}(K_M)) : \int_{M_t}|f|^2e^{-\varphi}<+\infty\},\]
where $|f|^2:=\sqrt{-1}^{n^2}f\wedge\bar{f}$ for any $(n,0)$ form $f$. For any $t\in [T,+\infty)$ and $\lambda>0$, denote that
\[\Psi_{\lambda,t}:=\lambda\max\{\Psi+t,0\}.\]
And for any $f\in A^2(M_T,e^{-\varphi})$, denote that
\[\|f\|_{\lambda,t}:=\left(\int_{M_T}|f|^2e^{-\varphi-\Psi_{\lambda,t}}\right)^{1/2}.\]
Note that
\[\|f\|_T^2:=\|f\|^2_{\lambda,T}=\int_{M_T}|f|^2e^{-\varphi}\]
for any $\lambda>0$, and
\[e^{\lambda(T-t)}\|f\|_T^2\leq\|f\|^2_{\lambda,t}\leq\|f\|_T^2<+\infty\]
for any $t\geq T$.

It is clear that $A^2(M_T,e^{-\varphi})$ is a Hilbert space. Denote the dual space of $A^2(M_T,e^{-\varphi})$ by $A^2(M_T,e^{-\varphi})^*$. For any $\xi\in A^2(M_T,e^{-\varphi})^*$, denote that the Bergman kernel related to $\xi$ is
\[K^{\varphi}_{\xi,\Psi,\lambda}(t):=\sup_{f\in A^2(M_T,e^{-\varphi})}\frac{|\xi\cdot f|^2}{\|f\|^2_{\lambda,t}}\]
for any $t\in [T,+\infty)$, where $K^{\varphi}_{\xi,\Psi,\lambda}(t)=0$ if $A^2(M_T,e^{-\varphi})=\{0\}$.

Denote $E_T:=(T,+\infty)+\sqrt{-1}\mathbb{R}:=\{w\in\mathbb{C} : \text{Re\ }w>T\}\subset\mathbb{C}$. We obtain the following log-subharmonicity property of the Bergman kernel $K^{\varphi}_{\xi,\Psi,\lambda}$.
\begin{Theorem}\label{concavity}
	Assume that $K^{\varphi}_{\xi,\Psi,\lambda}(t_0)\in (0,+\infty)$ for some $t_0\geq T$. Then $\log K^{\varphi}_{\xi,\Psi,\lambda}(\text{Re\ }w)$ is subharmonic with respect to $w\in E_T$.
\end{Theorem}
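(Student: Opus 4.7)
The plan is to translate the subharmonicity in $w\in E_T$ into convexity in $t=\operatorname{Re} w$ of $\log K^{\varphi}_{\xi,\Psi,\lambda}(t)$, dualize this Bergman-kernel quantity to a minimal $L^{2}$ integral, and establish the required log-concavity by combining convexity of $t\mapsto\Psi_{\lambda,t}$ with a Berndtsson/Ohsawa-Takegoshi argument on the Stein manifold $M_T$.

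First I would observe that $K^{\varphi}_{\xi,\Psi,\lambda}(\operatorname{Re} w)$ depends on $w$ only through its real part, so its logarithm is subharmonic on the strip $E_T$ if and only if $t\mapsto\log K^{\varphi}_{\xi,\Psi,\lambda}(t)$ is convex on $(T,+\infty)$. The hypothesis $K^{\varphi}_{\xi,\Psi,\lambda}(t_0)\in(0,+\infty)$, together with the comparability estimates $e^{\lambda(T-t)}\|f\|_T^{2}\leq\|f\|_{\lambda,t}^{2}\leq\|f\|_T^{2}$ already in the setup, forces $K^{\varphi}_{\xi,\Psi,\lambda}(t)\in(0,+\infty)$ for every $t\geq T$. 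Standard Hilbert-space duality for the sup in the definition of $K^{\varphi}_{\xi,\Psi,\lambda}$ then gives $K^{\varphi}_{\xi,\Psi,\lambda}(t)=1/G(t)$, where
\[G(t):=\inf\bigl\{\|f\|_{\lambda,t}^{2}:f\in A^{2}(M_T,e^{-\varphi}),\ \xi(f)=1\bigr\},\]
reducing the theorem to the log-concavity of $G$ in $t$.

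The key structural observation is that $t\mapsto\Psi_{\lambda,t}=\lambda\max\{\Psi+t,0\}$ is convex in $t$; this upgrades to joint plurisubharmonicity of $\Phi(z,w):=\varphi(z)+\Psi_{\lambda,\operatorname{Re} w}(z)$ on $M_T\times E_T$, since $\Psi(z)+\operatorname{Re} w$ is PSH in $(z,w)$, $\max\{\cdot,0\}$ preserves plurisubharmonicity, and $\varphi$ is PSH and independent of $w$. With this in hand I would apply a Berndtsson-type plurisubharmonicity theorem for fiberwise Bergman kernels on a trivial Stein family with PSH weight --- the variant developed in this series in \cite{BG1,BG2,BG-BB} --- to deduce that $\log K^{\varphi}_{\xi,\Psi,\lambda}(\operatorname{Re} w)$ is subharmonic in $w$. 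As an alternative, one can attack log-concavity of $G$ directly by adapting the Bao-Guan-Yuan concavity of minimal $L^{2}$ integrals \cite{BGY}: for $t_1<t_0<t_2$ with $t_0=\alpha t_1+(1-\alpha)t_2$, starting from near-minimizers for $G(t_1)$ and $G(t_2)$ one uses an Ohsawa-Takegoshi extension on $M_T$, with weight $\varphi+\Psi_{\lambda,t_0}$ and H\"older-type balancing of the two near-minimizers, to construct a test section at level $t_0$ yielding $G(t_0)\geq G(t_1)^{\alpha}G(t_2)^{1-\alpha}$.

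The main obstacle is technical rather than conceptual: $\Psi_{\lambda,t}$ has a corner along $\{\Psi=-t\}$, $\Psi$ is $-\infty$ along $\{F=0\}$, and $M_T$ need not be relatively compact in $M$. So a clean implementation will require smoothing the $\max$ by a convex approximation, exhausting $M_T$ by relatively compact Stein open subsets (available from the Steinness of $M$), and tracking the constants in the $L^{2}$ estimates carefully so that the sharp log-concave inequality is preserved in the limit.
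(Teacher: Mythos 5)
Your framework is the right one --- reducing subharmonicity in $w$ to convexity in $t=\operatorname{Re}w$, dualizing $K^{\varphi}_{\xi,\Psi,\lambda}(t)=1/G(t)$ with $G(t)=\inf\{\|f\|^2_{\lambda,t}:\xi\cdot f=1\}$, and observing that $\Phi(z,w)=\varphi(z)+\lambda\max\{\Psi(z)+\operatorname{Re}w,0\}$ is plurisubharmonic on $M_T\times E_T$ --- and this matches the skeleton of the paper's argument. But the decisive step is missing. Your first route invokes ``a Berndtsson-type plurisubharmonicity theorem for fiberwise Bergman kernels'' and cites \cite{BG1,BG2,BG-BB}; those references establish exactly this statement only for pseudoconvex domains in $\mathbb{C}^n$ (and for inner points), i.e.\ the special case this theorem is generalizing, so as written the proposal assumes what is to be proved. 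What actually closes the gap in the paper is concrete: take the extremal $f_0$ realizing $K(w_0)$ (this requires a Montel/Fatou argument, Lemma \ref{sup=max}), apply the Guan--Zhou optimal $L^2$ extension theorem on $M_T\times\Delta(w_0,r)$ with the weight $\Phi$ to extend $f_0$ from the fiber over $w_0$ with the sharp constant $\pi r^2$, then combine Jensen's inequality with the fact that $w\mapsto\xi\cdot\tilde f_w$ is holomorphic (Lemma \ref{xihol}, itself a nontrivial approximation argument for a general functional $\xi$, not a point evaluation) to get the sub-mean value inequality for $\log K$; upper semicontinuity is supplied separately (Lemma \ref{upper-semi}). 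None of these ingredients appears in your sketch, and the holomorphy of $\xi\cdot\tilde f_w$ in particular cannot be waved away.

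Your ``alternative'' route is moreover logically backwards. To get $\log G$ concave you need the lower bound $G(t_0)\geq G(t_1)^{\alpha}G(t_2)^{1-\alpha}$ at $t_0=\alpha t_1+(1-\alpha)t_2$, but constructing a test section at level $t_0$ out of near-minimizers at $t_1,t_2$ can only bound the infimum $G(t_0)$ from \emph{above}. The elementary H\"older route also fails in the needed direction: convexity of $t\mapsto\Psi_{\lambda,t}$ gives $e^{-\Psi_{\lambda,t_0}}\geq e^{-\alpha\Psi_{\lambda,t_1}-(1-\alpha)\Psi_{\lambda,t_2}}$, and H\"older then bounds the resulting integral by the product $\|f\|_{\lambda,t_1}^{2\alpha}\|f\|_{\lambda,t_2}^{2(1-\alpha)}$ from above, which yields nothing. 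The genuine input has to be complex-analytic, namely the optimal extension in the $w$-direction described above. Finally, the ``technical obstacles'' you list (smoothing the $\max$, exhausting $M_T$) are red herrings: the optimal $L^2$ extension theorem used here holds for arbitrary plurisubharmonic weights on Stein manifolds, so no regularization or exhaustion is needed.
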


When $M$ is a pseudoconvex domain in $\mathbb{C}^n$, and $T=0$, Theorem \ref{concavity} can be referred to \cite{BG-BB}.

We recall some notations in \cite{BG-BB,GMY-BC2}. Let $z_0$ be a point in $M$. Denote that
\[\tilde{J}(\Psi)_{z_0}:=\{f\in\mathcal{O}(\{\Psi<-t\}\cap V) : t\in\mathbb{R},\ V \text{\ is \ a \ neighborhood \ of \ } z_0\},\]
and
\[J(\Psi)_{z_0}:=\tilde{J}(\Psi)_{z_0}/\sim,\]
where the equivalence relation `$\sim$' is as follows:
\[f \sim g \ \Leftrightarrow \ f=g \text{\ on \ } \{\Psi<-t\}\cap V, \text{\ where\ }  t\gg 1, V \text{\ is\ a\ neighborhood\ of\ } z_0.\]
For any $f\in \tilde{J}(\Psi)_{z_0}$, denote the equivalence class of $f$ in $J(\Psi)_{z_0}$ by $f_{z_0}$. And for any $f_{z_0},g_{z_0}\in J(\Psi)_{z_0}$, and $(h,z_0)\in\mathcal{O}_{M,z_0}$, define
\[f_{z_0}+g_{z_0}:=(f+g)_{z_0},\ (h,z_0)\cdot f_{z_0}:=(hf)_{z_0}.\]
It is clear that $J(\Psi)_{z_0}$ is an $\mathcal{O}_{M,z_0}-$module. For any $a\geq 0$, denote that $I(a\Psi+\varphi)_{z_0}:=\big\{f_{z_0}\in J(\Psi)_{z_0} : \exists t\gg 1, V \text{\ is\ a\ neighborhood\ of\ } z_0,\ \text{s.t.\ } \int_{\{\Psi<-t\}\cap V}|f|^2e^{-a\Psi-\varphi}dV_M<+\infty\big\}$, where $dV_M$ is a continuous volume form on $M$. Then it is clear that $I(a\Psi+\varphi)_{z_0}$ is an $\mathcal{O}_{M,z_0}-$submodule of $J(\Psi)_{z_0}$. Especially, we denote that $I(\varphi)_{z_0}:=I(0\Psi+\varphi)_{z_0}$, $I(\Psi)_{z_0}:=I(\Psi+0)_{z_0}$, and $I_{z_0}:=I(0\Psi+0)_{z_0}$. Then $I(a\Psi+\varphi)_{z_0}$ is an $\mathcal{O}_{M,z_0}-$submodule of $I(\varphi)_{z_0}$ for any $a>0$. If $z_0\in\bigcap_{t>T}\{\Psi<-t\}$, then $I_{z_0}=\mathcal{O}_{M,z_0}$.

Let $Z_0$ be a subset of $M$. Let $J_{z_0}$ be an $\mathcal{O}_{M,z_0}-$submodule of $J(\Psi)_{z_0}$ for any $z_0\in Z_0$. For any $t\geq T$, denote that
\[A^2(M_t,e^{-\varphi})\cap J:=\left\{f\in A^2(M_t,e^{-\varphi}) :  f_{z_0}\in\mathcal{O}(K_M)_{z_0}\otimes J_{z_0}, \text{for\ any\ } z_0\in Z_0\right\}.\]
Assume that $A^2(M_T,e^{-\varphi})\cap J$ is a proper subspace of $A^2(M_T,e^{-\varphi})$.

Using Theorem \ref{concavity}, we obtain the following concavity property related to $K^{\varphi}_{\xi,\Psi,\lambda}$.

\begin{Theorem}\label{increasing}
	Assume that $I(\Psi+\varphi)_{z_0}\subset J_{z_0}$ for any $z_0\in Z_0$, and assume that $\xi\in A^2(M_T,e^{-\varphi})^*$ such that $\xi|_{A^2(M_T,e^{-\varphi})\cap J}\equiv 0$ and $K^{\varphi}_{\xi,\Psi,\lambda}(t_0)\in (0,+\infty)$ for some $t_0\geq T$. Then $-\log K^{\varphi}_{\xi,\Psi,\lambda}(t)+t$ is concave and increasing with respect to $t\in [T,+\infty)$.
\end{Theorem}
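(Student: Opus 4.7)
The concavity part is essentially immediate from Theorem~\ref{concavity}: since $K^{\varphi}_{\xi,\Psi,\lambda}$ depends only on $t = \text{Re\ }w$, subharmonicity on $E_T$ of $\log K^{\varphi}_{\xi,\Psi,\lambda}(\text{Re\ }w)$ is equivalent to convexity in $t$ of $\log K^{\varphi}_{\xi,\Psi,\lambda}(t)$ on $[T,+\infty)$, and adding the linear term $t$ to its negative preserves concavity.

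The substantive content is the monotonicity, which the plan is to reduce to the inequality
\[ K^{\varphi}_{\xi,\Psi,\lambda}(t_2) \le e^{t_2 - t_1}\, K^{\varphi}_{\xi,\Psi,\lambda}(t_1), \qquad T \le t_1 \le t_2. \]
Note that the naive pointwise bound $\Psi_{\lambda,t_2} - \Psi_{\lambda,t_1} \le \lambda(t_2-t_1)$ only yields the weaker factor $e^{\lambda(t_2-t_1)}$, so the extra hypotheses on $\xi$ and $J$ must genuinely enter when $\lambda > 1$. The strategy is: for each $f \in A^2(M_T,e^{-\varphi})$ with $\xi(f)\ne 0$, construct $\tilde f \in A^2(M_T, e^{-\varphi})$ with $\xi(\tilde f) = \xi(f)$ and $\|\tilde f\|^2_{\lambda,t_1} \le e^{t_2-t_1}\|f\|^2_{\lambda,t_2}$; taking the supremum in $f$ then gives the displayed inequality. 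The construction is via a H\"ormander-type $\bar\partial$-problem on the Stein manifold $M_T$: choose a smooth cutoff $\chi = \chi_0 \circ \Psi$ with $\chi = 1$ on $\{\Psi \le -t_2\}$ and $\chi = 0$ on $\{\Psi \ge -t_1\}$, solve $\bar\partial v = (\bar\partial\chi)\wedge f$ with a weight adjusted (for instance, using $\varphi + \Psi_{\lambda,t_1}$ plus a correction coming from $\chi_0'$) so that the $L^2$-estimate delivers the constant $e^{t_2-t_1}$, and put $\tilde f := \chi f - v$, which is holomorphic and again in $A^2(M_T,e^{-\varphi})$.

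The remaining verification is $\xi(\tilde f - f) = 0$. The difference $\tilde f - f = (\chi-1)f - v$ vanishes near any $z_0 \in Z_0 \cap M_{t_2}$ up to the term $-v$, and for the right choice of weight in the $\bar\partial$-problem $v$ satisfies $\int_U |v|^2 e^{-\varphi-\Psi} < +\infty$ in a neighborhood $U$ of $z_0$ intersected with a sufficiently deep sublevel set $\{\Psi < -s\}$; in particular $(\tilde f - f)_{z_0} \in I(\Psi+\varphi)_{z_0} \subset J_{z_0}$ for every $z_0 \in Z_0$, so that $\tilde f - f \in A^2(M_T,e^{-\varphi}) \cap J$ and the vanishing hypothesis $\xi|_{A^2(M_T,e^{-\varphi})\cap J} \equiv 0$ yields $\xi(\tilde f) = \xi(f)$. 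The main obstacle is the joint calibration of $\chi_0$ and the auxiliary weight in the $\bar\partial$-estimate so as to produce simultaneously (i)~the exact factor $e^{t_2-t_1}$ and (ii)~enough integrability of $v$ against $e^{-\varphi-\Psi}$ to embed $\tilde f - f$ into $I(\Psi+\varphi)$; this calibration is exactly what the optimal $L^2$-extension theorem of Berndtsson and Guan--Zhou delivers in the Stein setting, applied to the pair of sublevel sets $(M_{t_2}, M_{t_1})$.
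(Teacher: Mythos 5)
Your concavity argument coincides with the paper's (Theorem \ref{concavity} plus the fact that a subharmonic function of $\mathrm{Re}\,w$ is convex in $\mathrm{Re}\,w$; note that to pass from convexity on the open interval $(T,+\infty)$ to $[T,+\infty)$ one also needs the upper semicontinuity of $K^{\varphi}_{\xi,\Psi,\lambda}$ at $t=T$, i.e.\ Lemma \ref{upper-semi}). The monotonicity part, however, is where your proposal departs from the paper and where it has a genuine gap. You reduce monotonicity to the two-point inequality $K^{\varphi}_{\xi,\Psi,\lambda}(t_2)\le e^{t_2-t_1}K^{\varphi}_{\xi,\Psi,\lambda}(t_1)$ and propose to prove it by constructing, for each competitor $f$, a corrected form $\tilde f$ with $\xi(\tilde f)=\xi(f)$ and $\|\tilde f\|^2_{\lambda,t_1}\le e^{t_2-t_1}\|f\|^2_{\lambda,t_2}$. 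The entire difficulty is the loss-free constant $e^{t_2-t_1}$, and your proposal defers it to a ``calibration'' that you assert the optimal $L^2$ extension theorem delivers. This is not justified: the available $L^2$ lemma in this setting (Lemma \ref{L2mthod}) produces only an unspecified multiplicative constant $C$, which destroys the sharp factor; moreover your target norms are the $\lambda$-regularized norms $\|\cdot\|_{\lambda,t}$ (with weight $e^{-\varphi-\lambda\max\{\Psi+t,0\}}$ over all of $M_T$, not an integral over a sublevel set), and obtaining the factor $e^{t_2-t_1}$ \emph{uniformly in} $\lambda$ for these weights is not what the Ohsawa--Takegoshi-type statements give out of the box. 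In effect you are assuming a sharp two-point estimate that is essentially equivalent to the concavity results of \cite{BGY,GMY-BC2}, i.e.\ to the theorem being proved.

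The paper avoids needing any sharp constant by a softer argument that your proposal misses: since $-\log K^{\varphi}_{\xi,\Psi,\lambda}(t)+t$ is already known to be concave on the half-line $[T,+\infty)$, it suffices to show it is \emph{bounded below}, because a concave function on a half-line that is bounded below must be nondecreasing. The lower bound is obtained by comparing $K(t)$ only to $K(T)$: for the extremal $f_t$ with $\xi\cdot f_t=1$ and $K(t)=\|f_t\|_{\lambda,t}^{-2}$, Lemma \ref{L2mthod} (with its crude constant $C$) produces $\tilde F_t\in A^2(M_T,e^{-\varphi})$ with $(\tilde F_t-f_t)_{z_0}\in\mathcal{O}(K_M)_{z_0}\otimes I(\Psi+\varphi)_{z_0}\subset\mathcal{O}(K_M)_{z_0}\otimes J_{z_0}$, hence $\xi\cdot\tilde F_t=1$, and with $\int_{M_T}|\tilde F_t|^2e^{-\varphi}\le C_1e^{t}\|f_t\|^2_{\lambda,t}$; combined with $\int_{M_T}|\tilde F_t|^2e^{-\varphi}\ge K(T)^{-1}$ this yields $-\log K(t)+t\ge\log\bigl(C_1^{-1}K(T)\bigr)$ for all $t$. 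Your verification that $\tilde f-f$ lies in $A^2(M_T,e^{-\varphi})\cap J$ via integrability against $e^{-\varphi-\Psi}$ is the right mechanism and matches the paper, but to make your route work you would have to actually produce the exact factor $e^{t_2-t_1}$; as written, the key estimate is asserted rather than proved.
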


When $M$ is a pseudoconvex domain in $\mathbb{C}^n$, $Z_0$ is one point, and $T=0$, Theorem \ref{increasing} can be referred to \cite{BG-BB}.

\subsection{Applications}
\
Recall that $M$ is an $n-$dimensional Stein manifold. Let $\psi$ be a plurisubharmonic function on $M$. Let $F\not\equiv 0$ be a holomorphic function on $M$, and let $T\in [-\infty,+\infty)$. Let
\[\Psi=\min\{\psi-2\log |F|,-T\}.\]
If $F(z)=0$ for some $z\in M$, set $\Psi(z)=-T$. For any $t\geq T$, denote that $M_t:=\{\Psi<-t\}$. We give the following lower estimate of weighted $L^2$ integrals on sublevel sets $\{\Psi<-t\}$ by Theorem \ref{concavity} and Theorem \ref{increasing}.

\begin{Corollary}[see \cite{GMY-BC2}]\label{L2integral}
	Let $\varphi$ be a plurisubharmonic function on $M$, and let $f$ be a holomorphic $(n,0)$ form on $\{\Psi<-t_0\}$ for some $t_0\geq T$ such that $f\in A^2(M_{t_0},e^{-\varphi})$. Let $z_0\in M$, and assume that $a_{z_0}^f(\Psi;\varphi)<+\infty$, where
	\[a_{z_0}^f(\Psi;\varphi):=\sup\{a\geq 0 : f_{z_0}\in \mathcal{O}(K_M)_{z_0}\otimes I(2a\Psi+\varphi)_{z_0}\}.\]
	Then for any $r\in (0,e^{-a_{z_0}^f(\Psi;\varphi)t_0}]$, we have
	\[\frac{1}{r^2}\int_{\{a_{z_0}^f(\Psi;\varphi)\Psi<\log r\}}|f|^2e^{-\varphi}\geq e^{2a_{z_0}^f(\Psi;\varphi)t_0}C,\]
	where
	\begin{flalign*}
		\begin{split}
			C:=&C(\Psi,\varphi,I_+(2a_{z_0}^f(\Psi;\varphi)\Psi+\varphi)_{z_0},f,M_{t_0})\\
			:=&\inf\bigg\{\int_{M_{t_0}}|\tilde{f}|^2e^{-\varphi} : \tilde{f}\in A^2(M_{t_0},e^{-\varphi})\\ 
			& \& \  (\tilde{f}-f)_{z_0}\in \mathcal{O}(K_M)_{z_0}\otimes I_+(2a_{z_0}^f(\Psi;\varphi)\Psi+\varphi)_{z_0}\bigg\},
		\end{split}
	\end{flalign*} 
	and
	\[I_+(p\Psi+\varphi)_{z_0}=\bigcup_{p'>p}I(p'\Psi+\varphi)_{z_0}\]
	for any $p>0$.
\end{Corollary}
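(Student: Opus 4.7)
The plan is to derive the claimed estimate from Theorem \ref{increasing} applied to an appropriate Bergman kernel. Write $a:=a_{z_0}^f(\Psi;\varphi)$, which is finite by hypothesis. I would first reduce to the case $T=t_0$ by replacing $\Psi$ with $\Psi':=\min\{\psi-2\log|F|,-t_0\}$: this agrees with $\Psi$ on $M_{t_0}$, has sublevel sets $\{\Psi'<-t\}=M_t$ for $t\geq t_0$, and gives $f\in A^2(M_{t_0},e^{-\varphi})$, so we may assume $T=t_0$ in the setup of the main theorems.

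Next I would take $Z_0:=\{z_0\}$ and $J_{z_0}:=I_+(2a\Psi+\varphi)_{z_0}$. Since $a_{z_0}^f(\Psi;\varphi)=a$, the germ $f_{z_0}$ does not lie in $\mathcal{O}(K_M)_{z_0}\otimes J_{z_0}$, and hence $V:=A^2(M_{t_0},e^{-\varphi})\cap J$ is a proper closed subspace. Letting $P_V$ denote the orthogonal projection onto $V$ in the $L^2(M_{t_0},e^{-\varphi})$ inner product, we have $\|f-P_Vf\|^2=C$. I would define $\xi\in A^2(M_{t_0},e^{-\varphi})^*$ to be the Riesz representative of $f-P_Vf$, so that $\xi|_V\equiv 0$, $\xi(f)=C$, and $K^{\varphi}_{\xi,\Psi,2a}(t_0)=C$. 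To verify the module-theoretic hypothesis $I(\Psi+\varphi)_{z_0}\subset J_{z_0}$ of Theorem \ref{increasing} (which is automatic when $2a<1$ but otherwise fails), I would rescale $\Psi$ by a positive integer $N>2a$, setting $\hat\Psi:=N\Psi$, $\hat F:=F^N$, $\hat\psi:=N\psi$, $\hat T:=Nt_0$, and $\hat\lambda:=2a/N$. Then $I(\hat\Psi+\varphi)_{z_0}=I(N\Psi+\varphi)_{z_0}\subset I_+(2a\Psi+\varphi)_{z_0}=J_{z_0}$, and the norm $\|\cdot\|^2_{\hat\lambda,Ns}$ with respect to $\hat\Psi$ coincides with $\|\cdot\|^2_{2a,s}$ with respect to $\Psi$.

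Theorem \ref{increasing} then yields that $-\log K^{\varphi}_{\xi,\hat\Psi,\hat\lambda}(t)+t$ is concave and increasing on $[Nt_0,+\infty)$. The duality inequality $\|f\|^2_{2a,s}\geq |\xi(f)|^2/K^{\varphi}_{\xi,\hat\Psi,\hat\lambda}(Ns)$, combined with the fact that the weight in $\|\cdot\|^2_{2a,s}$ equals $1$ on $M_s$, yields a lower bound on $\int_{M_s}|f|^2e^{-\varphi}$; substituting $s=-(\log r)/a$ then translates this into the Corollary's statement. The main obstacle is to recover the sharp exponent $2a$ in the decay $Ce^{-2a(s-t_0)}$ rather than the exponent $N$ that the integer rescaling naively produces: this requires exploiting the full concavity of $-\log K+t$ near $t=Nt_0$ (and not merely its monotonicity), together with careful separation of the desired integral $\int_{M_s}|f|^2e^{-\varphi}$ from the auxiliary contribution $\int_{M_{t_0}\setminus M_s}|f|^2e^{-\varphi-2a(\Psi+s)}$ in the expansion of $\|f\|^2_{2a,s}$, so that the asymptotic slope of $-\log K+t$ at the endpoint reflects the correct integrability exponent $2a$ dictated by $f_{z_0}\notin\mathcal{O}(K_M)_{z_0}\otimes I_+(2a\Psi+\varphi)_{z_0}$.
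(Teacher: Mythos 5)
Your overall strategy (rescale $\Psi$, apply Theorem \ref{increasing} to a functional vanishing on $A^2\cap J$, then dualize via the orthogonal decomposition as in Lemma \ref{B=C}) is the same as the paper's, and the reduction to $T=t_0$, the choice of $\xi$ as the Riesz representative of $f-P_Vf$, and the observation that $\hat\lambda=2a/N$ makes the rescaled norms match are all fine. The genuine gap is exactly the one you flag at the end, and your proposed remedy does not close it. With the integer rescaling $\hat\Psi=N\Psi$, $\hat T=Nt_0$, Theorem \ref{increasing} gives $g(t):=-\log K^{\varphi}_{\xi,\hat\Psi,\hat\lambda}(t)+t\geq g(Nt_0)$ for $t\geq Nt_0$, hence $K(Ns)\leq K(Nt_0)e^{N(s-t_0)}$ and only the decay $Ce^{-N(s-t_0)}$, i.e. $\frac{1}{r^{N/a}}\int_{\{a\Psi<\log r\}}|f|^2e^{-\varphi}\geq e^{Nt_0}C$, which is strictly weaker than the claim. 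Concavity cannot upgrade this: since $g$ is concave its derivative is non-increasing, so concavity only bounds $g(t)-g(Nt_0)$ from \emph{above} by $g'(Nt_0^+)(t-Nt_0)$; to get the sharp exponent you would need a \emph{lower} bound $g'\geq 1-2a/N$ everywhere, which amounts to an upper growth bound $K(t)\lesssim e^{(2a/N)t}$ — essentially the statement you are trying to prove. So "exploiting the full concavity near $t=Nt_0$" is circular as stated.

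The paper's way around this is to rescale by a \emph{non-integer} $p$ slightly larger than $2a$ and then let $p\to 2a^+$. The point you are missing is that $p\Psi$ still fits the framework of Section \ref{main} even when $p\notin\mathbb{Z}$, via
\[p\Psi=\min\{p\psi+(2\lceil p\rceil-2p)\log|F|-2\log|F^{\lceil p\rceil}|,-pt_0\}\]
on $\{p\Psi<-pt_0\}$: the fractional part of the power of $|F|$ is absorbed into the plurisubharmonic weight (it is plurisubharmonic because $\lceil p\rceil-p\geq 0$), and only the integer power $F^{\lceil p\rceil}$ plays the role of the holomorphic function. One then takes $J_p:=I(p\Psi+\varphi)_{z_0}$, so that the hypothesis $I(p\Psi+\varphi)_{z_0}\subset J_p$ of Theorem \ref{increasing} is automatic, obtains $\int_{\{p\Psi<-t\}}|f|^2e^{-\varphi}\geq e^{-t+pt_0}C(p\Psi,\varphi,J_p,f,M_{t_0})$ after letting $\lambda\to+\infty$ (dominated convergence kills the auxiliary term on $\{-pt_0>p\Psi\geq-t\}$ — this also answers your "careful separation" worry: you should let the parameter $\lambda$ tend to infinity rather than fix it), and finally lets $p\to 2a^+$, using $J_p\subset I_+(2a\Psi+\varphi)_{z_0}$ to compare the constants and dominated convergence on the sublevel sets to pass to the limit. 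Substituting $t=-2\log r$ then gives the stated inequality with the sharp exponent $2a$.
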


\begin{Remark}\label{a>0}
	In Corollary \ref{L2integral}, for any $z_0\in M$, the proof of $a_{z_0}^f(\Psi;\varphi)>0$ can be referred to \cite{GMY-BC2}.
\end{Remark}

Theorem \ref{concavity} and Theorem \ref{increasing} also deduce a reproof of the following effectiveness result of strong openness property of the module $I(a\Psi+\varphi)_{z_0}$ on Stein manifolds.

\begin{Corollary}[see \cite{GMY-BC2,BG-BB}]\label{SOPE}
	Let $\varphi$ be a plurisubharmonic function on $M$, and let $f$ be a holomorphic $(n,0)$ form on $M_{t_0}=\{\Psi<-t_0\}$ for some $t_0\geq T$ such that $f\in A^2(M_{t_0},e^{-\varphi})$. Let $z_0\in M$, and assume that $a_{z_0}^f(\Psi;\varphi)<+\infty$. Let $C_1$ and $C_2$ be two positive constants. If
	
	(1) $\int_{M_{t_0}}|f|^2e^{-\varphi-\Psi}\leq C_1$;
	
	(2) $C(\Psi,\varphi,I_+(2a_o^f(\Psi;\varphi)\Psi+\varphi)_{z_0},f,M_{t_0})\geq C_2$,\\	
	then for any $q>1$ satisfying
	\[\theta(q)>\frac{C_1}{C_2},\]
	we have $f_{z_0}\in \mathcal{O}(K_M)_{z_0}\otimes I(q\Psi+\varphi)_{z_0}$, where $\theta(q)=\frac{q}{q-1}e^{t_0}$.
\end{Corollary}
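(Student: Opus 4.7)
The plan is to argue by contradiction. Assume $f_{z_0} \notin \mathcal{O}(K_M)_{z_0} \otimes I(q\Psi+\varphi)_{z_0}$ for some $q > 1$ satisfying $\theta(q) > C_1/C_2$. Writing $a := a_{z_0}^f(\Psi;\varphi)$, the definition of $a$ as a supremum immediately forces $2a \leq q$.

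Next, I would apply Corollary \ref{L2integral} (already obtained from Theorems \ref{concavity} and \ref{increasing}) at the exponent $a$ itself; combined with $C \geq C_2$ from hypothesis (2), this yields $\int_{M_u} |f|^2 e^{-\varphi} \geq C_2\, e^{-2a(u - t_0)}$ for every $u \geq t_0$. Then I would rewrite the integral in hypothesis (1) via a Fubini / layer-cake decomposition of $e^{-\Psi}$ with respect to the measure $|f|^2 e^{-\varphi}\, dV_M$ on $M_{t_0}$, arriving at
\[
\int_{M_{t_0}} |f|^2 e^{-\varphi-\Psi} = e^{t_0} \int_{M_{t_0}} |f|^2 e^{-\varphi} + \int_{t_0}^{+\infty} \left(\int_{M_u} |f|^2 e^{-\varphi}\right) e^u\, du.
\]
Inserting the pointwise lower bound into both terms on the right (which simultaneously rules out $a \leq 1/2$, since otherwise the second integral diverges and already contradicts (1)), a short computation gives $C_1 \geq \frac{2a}{2a-1}\, C_2\, e^{t_0}$.

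To close the argument I would use that $x \mapsto x/(x-1)$ is decreasing on $(1,+\infty)$, so $2a \leq q$ yields $\frac{2a}{2a-1} \geq \frac{q}{q-1}$, and hence $C_1/C_2 \geq \frac{q}{q-1}e^{t_0} = \theta(q)$, contradicting the hypothesis. The point requiring the most care is the layer-cake accounting: both contributions must be retained, since the boundary term $e^{t_0} \int |f|^2 e^{-\varphi}$ is exactly what promotes the coefficient $\frac{1}{2a-1}$ to $\frac{2a}{2a-1}$, matching the form of $\theta(q) = \frac{q}{q-1}e^{t_0}$; dropping it would yield a strictly weaker bound that fails to close under the hypothesized inequality.
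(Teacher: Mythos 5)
Your proposal is correct and follows essentially the same route as the paper: the same Fubini/layer-cake decomposition of $\int_{M_{t_0}}|f|^2e^{-\varphi-\Psi}$ combined with the exponential lower bound $\int_{M_u}|f|^2e^{-\varphi}\geq C_2e^{-2a(u-t_0)}$ coming from Theorem \ref{increasing} and Lemma \ref{B=C}. The only (cosmetic) difference is that you apply the decay estimate at the critical exponent $2a_{z_0}^f(\Psi;\varphi)$ via Corollary \ref{L2integral} and finish with the monotonicity of $q\mapsto q/(q-1)$, whereas the paper works with each $q>2a_{z_0}^f(\Psi;\varphi)$ from inequality (\ref{f>e^-tC2}) and lets $q\to 2a_{z_0}^f(\Psi;\varphi)+0$ at the end.
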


\section{Preparations}
\subsection{$L^2$ methods}
\

We recall the optimal $L^2$ extension theorem.

Let $M$ be an $n-$dimensional Stein manifold. Let $D=\Delta_{w_0,r}=\{w\in\mathbb{C} : |w-w_0|<r\}\subset\mathbb{C}$, and $w$ be the coordinate of $D$. Let $\Omega=M\times D$ be an $(n+1)-$dimensional complex manifold, and $p$ be the natural projection from $\Omega$ to $D$. Denote that $\Omega_w:=p^{-1}(w)$ for any $w\in D$. Let $\varphi$ be a plurisubharmonic function on $\Omega$. The following lemma will be used in the proof of Theorem \ref{concavity}.

\begin{Lemma}[Optimal $L^2$ extension theorem (see \cite{guan-zhou12,guan-zhou13ap,guan-zhou13p})]\label{L2ext}
	For any $u$ in $A^2(\Omega_{w_0},e^{-\varphi})$, there exists a holomorphic $(n+1,0)$ form $\tilde{u}$ on $\Omega$, such that $\frac{\tilde{u}}{dw}|_{\Omega_{w_0}}=u$, and
	\[\frac{1}{\pi r^2}\int_{\Omega}\frac{1}{2}|\tilde{u}|^2e^{-\varphi}\leq\int_{\Omega_{w_0}}|u|^2e^{-\varphi}.\]
\end{Lemma}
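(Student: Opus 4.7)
The plan is to prove the sharp $L^2$-extension estimate via the twisted $\bar\partial$-technique developed by Guan-Zhou, in which auxiliary one-variable functions solving a coupled ODE system tune the H\"ormander estimate so that it achieves precisely the optimal constant $1/(\pi r^2)$. Before the construction I carry out the standard reductions: exhaust the Stein manifold $M$ by relatively compact sub-Stein open sets $M_j \Subset M_{j+1}$ with $\bigcup_j M_j = M$, regularize $\varphi$ to smooth strictly plurisubharmonic weights $\varphi_\epsilon$ on $\Omega_j := M_j \times D$, and assume by a density argument that $u \in A^2(\Omega_{w_0}, e^{-\varphi})$ extends holomorphically to a neighbourhood of the fiber. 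Since the target estimate is stable under weak limits in the Hilbert space of holomorphic $(n+1,0)$ forms, it suffices to produce an extension with the sharp bound for each approximation and then extract a limit as $\epsilon \to 0$ and $j \to \infty$.

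On each such approximation, I choose a family of radial cutoffs $\chi_s(|w-w_0|^2)$ supported in $\{|w-w_0|^2 < s\}$ with $s \nearrow r^2$, identify the fiber $\Omega_{w_0}$ with the other fibers via the product projection, and form the naive extension $v_s := \chi_s \cdot (\text{lift of } u) \wedge dw$. To correct $v_s$ into a holomorphic form, I solve the $\bar\partial$-equation
\[
\bar\partial \eta_s = \bar\partial v_s
\]
with an auxiliary twisted weight $\varphi + \Phi_s$, where $\Phi_s$ is built from two one-variable functions $b_s(t)$ and $\tilde b_s(t)$ composed with a regularization of $t = -\log(s - |w-w_0|^2)$. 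Because $\Phi_s$ is chosen singular enough on the fiber $\Omega_{w_0}$ to force $\eta_s$ to vanish there, $\tilde u_s := v_s - \eta_s$ is a genuine holomorphic extension of $u$.

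The main obstacle is that plain H\"ormander methods give a constant worse than $1/(\pi r^2)$; achieving the sharp value requires the Guan-Zhou ODE trick. The twisted $L^2$ estimate yields an inequality of the schematic form
\[
\int_{\Omega_j}|\eta_s|^2 e^{-\varphi - \tilde b_s} \;\leq\; \int_{\Omega_j} \frac{|\bar\partial v_s|^2}{b_s''}\, e^{-\varphi - \tilde b_s + b_s},
\]
subject to a plurisubharmonicity constraint that translates into a first-order ODE system relating $b_s$ and $\tilde b_s$. Solving this system so as to saturate the constraint, together with the observation that $\bar\partial \chi_s$ concentrates on the circle $\{|w-w_0|^2 = s\}$ so that the error integrand factors via Fubini into a one-variable integral whose $w$-piece equals the disk area $\pi s$, yields exactly the factor $1/(\pi s)$ in the resulting estimate. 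Sending $s \nearrow r^2$ produces the optimal constant $1/(\pi r^2)$, and the diagonal weak limit through $\epsilon \to 0$ and $j \to \infty$ supplies the global holomorphic $(n+1,0)$ form $\tilde u$ on $\Omega$ with the sharp inequality and the correct restriction $\tilde u/dw|_{\Omega_{w_0}} = u$.
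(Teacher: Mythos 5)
The paper offers no proof of this lemma at all: it is quoted as the Guan--Zhou optimal $L^2$ extension theorem from \cite{guan-zhou12,guan-zhou13ap,guan-zhou13p}, so you are attempting to reprove a cited deep result rather than to match an argument in the text. Your sketch does invoke the right machinery (twisted $\bar\partial$-estimates with auxiliary one-variable functions solving an ODE system), but as written it has a genuine gap in the one place where the Ohsawa--Takegoshi structure matters: the localization. You let the cutoff $\chi_s(|w-w_0|^2)$ transition at the outer circle $\{|w-w_0|^2=s\}$ with $s\nearrow r^2$. Then both the main term $\chi_s\cdot(\text{lift of }u)\wedge dw$ over the bulk of $M\times D$ and the error term $\bar\partial v_s$, supported near that outer circle, involve $\int_M |u(z)|^2 e^{-\varphi(z,w)}$ for $w$ far from $w_0$. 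Since $\varphi$ is only assumed plurisubharmonic on the product and genuinely depends on $w$, these quantities are not controlled by the hypothesis $\int_{\Omega_{w_0}}|u|^2e^{-\varphi}<+\infty$; the trivial lift of $u$ need not be $L^2$ against $e^{-\varphi(\cdot,w)}$ on any fiber other than the central one. So no estimate of the stated form can come out of that construction: the right-hand side of your twisted inequality simply is not bounded by the central-fiber norm.

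In the actual Guan--Zhou argument the transition region shrinks onto the central fiber, not onto the boundary: one takes $\Psi=2\log(|w-w_0|/r)$ and a cutoff $b_{t_0}(\Psi)$ whose derivative is supported on $\{-t_0-1<\Psi<-t_0\}$, a thin annulus around $\{w=w_0\}$, and lets $t_0\to+\infty$. The constant $\pi r^2$ does not appear as a ``disk area picked up by $\bar\partial\chi_s$ on the circle'' (a circle carries no area, and the error integral depends on the ODE weights); it appears because $\int_{\{-t_0-1<\Psi<-t_0\}}e^{-\Psi}\,d\lambda(w)=\pi r^2$ for every $t_0$, after which the limit $t_0\to+\infty$ converts the error term into the central-fiber integral $\int_{\Omega_{w_0}}|u|^2e^{-\varphi}$ by a mean-value/Fubini argument. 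Finally, even granting the correct localization, your proposal never exhibits or solves the ODE system for $b$ and $\tilde b$, which is precisely what upgrades a plain H\"ormander bound to the optimal constant; as it stands the sketch asserts the sharp constant rather than deriving it. If you want a genuinely different route, the Berndtsson--Lempert method (plurisubharmonic variation of fiberwise Bergman kernels) gives the optimal constant without the ODE trick, but that is a different argument from the one you outline.
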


Let $M$ be an $n-$dimensional weakly pseudoconvex K\"{a}hler manifold. Let $F\not\equiv 0$ be a holomorphic function on $M$, and $\psi$ be a plurisubharmonic function on $M$. Let $\varphi_{\alpha}$ be a Lebesgue measurable function on $M$ such that $\varphi_{\alpha}+\psi$ is a plurisubharmonic function on $M$. Let $T$ be a real number. Denote that
\[\tilde{\varphi}:=\varphi_{\alpha}+2\max\{\psi+T,2\log|F|\},\]
and
\[\Psi:=\min\{\psi-2\log|F|, -T\}.\]
If $F(z)=0$ for some $z\in M$, set $\Psi(z)=-T$. The following lemma will be used to prove Theorem \ref{increasing}.

\begin{Lemma}[\cite{GMY-BC2}]\label{L2mthod}
	Let $t_0\in (T,+\infty)$ be arbitrarily given. Let $f$ be a holomorphic $(n,0)$ form on $\{\Psi<-t_0\}$ such that
	\[\int_{\{\Psi<-t_0\}\cap K}|f|^2<+\infty,\]
	for any compact subset $K\subset M$, and
	\[\int_M\mathbb{I}_{\{-t_0-1<\Psi<-t_0\}}|f|^2e^{-\varphi_{\alpha}-\Psi}<+\infty.\]
	Then there exists a holomorphic $(n,0)$ form $\tilde{F}$ on $M$ such that
	\[\int_M|\tilde{F}-(1-b_{t_0}(\Psi))fF^2|^2e^{-\tilde{\varphi}+v_{t_0}(\Psi)-\Psi}\leq C\int_M\mathbb{I}_{\{-t_0-1<\Psi<-t_0\}}|f|^2e^{-\varphi_{\alpha}-\Psi},\]
	where $b_{t_0}(t)=\int_{-\infty}^t\mathbb{I}_{\{-t_0-1<s<-t_0\}}\mathrm{d}s$, $v_{t_0}(t)=\int_{-t_0}^tb_{t_0}(s)\mathrm{d}s-t_0$ and $C$ is a positive constant.
\end{Lemma}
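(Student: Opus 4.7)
The plan is to produce $\tilde F$ via the $\bar\partial$-equation method. First I would form the smooth $(n,0)$-form $G := (1-b_{t_0}(\Psi))fF^2$; after mollifying $b_{t_0}'$ to a smooth bump as needed, this gives $G = fF^2$ on $\{\Psi \leq -t_0-1\}$, $G = 0$ on $\{\Psi \geq -t_0\}$, and $\bar\partial G = -b_{t_0}'(\Psi)\,\bar\partial\Psi \wedge fF^2$, supported in the thin transition band $\{-t_0-1<\Psi<-t_0\}$. The $F^2$ factor absorbs the pole of $\bar\partial\Psi$ along $\{F=0\}$, so $G$ and $\bar\partial G$ are genuinely smooth on $M$. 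I would then aim to solve $\bar\partial u = \bar\partial G$ with the $L^2$ bound
\[\int_M |u|^2 e^{-\tilde\varphi + v_{t_0}(\Psi)-\Psi} \leq C \int_M \mathbb{I}_{\{-t_0-1<\Psi<-t_0\}} |f|^2 e^{-\varphi_\alpha - \Psi},\]
and set $\tilde F := G-u$; since $\bar\partial\tilde F = 0$, the claimed estimate on $|\tilde F - (1-b_{t_0}(\Psi))fF^2|^2 = |u|^2$ follows immediately.

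To produce such a $u$, I would apply the twisted H\"ormander--Demailly $L^2$ estimate in the Guan-Zhou optimal form on relatively compact pseudoconvex exhaustions of $M$ (available since $M$ is weakly pseudoconvex K\"ahler), after replacing $\log|F|$ by $\frac{1}{2}\log(|F|^2+\epsilon^2)$ and letting $\epsilon\to 0$ via weak $L^2$ compactness. Auxiliary factors $\eta = s(\Psi)$ and $\phi = u(\Psi)$ of a single real variable would be introduced so that the twisted curvature form
\[\eta\bigl(i\partial\bar\partial(\tilde\varphi - v_{t_0}(\Psi)+\Psi) + i\partial\bar\partial\eta\bigr) - \phi^{-1}\,i\partial\eta\wedge\bar\partial\eta\]
is semipositive on each exhausting domain. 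The shape of $\tilde\varphi = \varphi_\alpha + 2\max\{\psi+T, 2\log|F|\}$ delivers $i\partial\bar\partial\tilde\varphi \geq 0$ from the plurisubharmonicity of $\varphi_\alpha+\psi$ and of $\log|F|$ combined as a maximum; meanwhile $v_{t_0}'=b_{t_0}\in[0,1]$ and $v_{t_0}''=b_{t_0}'\geq 0$ allow the non-plurisubharmonic contribution $-v_{t_0}(\Psi)+\Psi$ to be absorbed into the $\phi^{-1}|\partial\eta|^2$ correction, provided $s,u$ satisfy the Guan-Zhou ODE matched to $b_{t_0}$ and $v_{t_0}$.

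Once $u$ is in hand, I would plug $|\bar\partial G|^2 \leq b_{t_0}'(\Psi)^2 |\bar\partial\Psi|^2 |fF^2|^2$ into the twisted H\"ormander output $\int |u|^2 e^{-\tilde\varphi+v_{t_0}(\Psi)-\Psi} \leq \int \phi(\Psi)|\bar\partial G|^2 e^{-\tilde\varphi+v_{t_0}(\Psi)-\Psi}$ and use that $\Psi < -T$ on the support of $b_{t_0}'(\Psi)$, so $\tilde\varphi = \varphi_\alpha + 4\log|F|$ there and the factor $|F|^4$ from $|fF^2|^2$ cancels the $|F|^{-4}$ hidden in $e^{-\tilde\varphi}$, leaving $|f|^2 e^{-\varphi_\alpha-\Psi}$ weighted by functions of $\Psi$ that are bounded on the band. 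The main obstacle will be precisely this curvature bookkeeping: arranging the semipositivity of the twisted form uniformly across the transition band, and confirming the reduction of the right-hand side to $C\int \mathbb{I}_{\{-t_0-1<\Psi<-t_0\}}|f|^2 e^{-\varphi_\alpha-\Psi}$ with a constant $C$ independent of the pseudoconvex exhaustion and of $\epsilon$, so that the weak-limit extraction produces a global holomorphic $\tilde F$ on $M$.
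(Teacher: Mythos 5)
The paper does not actually prove this lemma: it is imported as a black box from \cite{GMY-BC2}, and the proof there is exactly the route you outline --- take $G=(1-b_{t_0}(\Psi))fF^2$ as the crude extension, solve $\bar\partial u=\bar\partial G$ by the Guan--Zhou twisted $L^2$ method with auxiliary functions $\eta=s(\Psi)$, $\phi=u(\Psi)$ solving the associated ODE, regularize $\log|F|$ by $\frac12\log(|F|^2+\epsilon^2)$, and pass to the limit over a pseudoconvex exhaustion. So your plan is the correct one and coincides with the source's approach. Be aware, though, that everything you have deferred is the entire content of the proof: the key algebraic point is the identity $\Psi=\psi-\max\{\psi+T,2\log|F|\}$, which shows $\tilde\varphi+\Psi=(\varphi_\alpha+\psi)+\max\{\psi+T,2\log|F|\}$ is plurisubharmonic, so that only the $-v_{t_0}(\Psi)$ term needs absorbing by the twist; the curvature inequality is usually written $\eta\,\sqrt{-1}\partial\bar\partial(\cdot)-\sqrt{-1}\partial\bar\partial\eta-\phi^{-1}\sqrt{-1}\partial\eta\wedge\bar\partial\eta\geq 0$ (your version, with $+\eta\,\sqrt{-1}\partial\bar\partial\eta$, has the wrong sign and placement on the $\partial\bar\partial\eta$ term); and since $\Psi$ and $b_{t_0}$ are not smooth, $\bar\partial G$ only makes sense after regularizing $\psi$ as well as $b_{t_0}$, which is where the Stein/weakly pseudoconvex hypothesis and the limiting argument do real work.
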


\subsection{Some lemmas about submodules of $J(\Psi)$}
\
Let $F$ be a holomorphic function on a pseudoconvex domain $D\subset\mathbb{C}^n$ containing the origin $o\in\mathbb{C}^n$. Let $\psi$ be a plurisubharmonic function on $D$. Let $\varphi$ be a plurisubharmonic function on $D$. Let $T$ be a real number. Denote that
\[\Psi:=\min\{\psi-2\log|F|,-T\}.\]
If $F(z)=0$ for some $z\in D$, we set $\Psi(z)=-T$.

We recall the following lemma.

\begin{Lemma}[\cite{GMY-BC2}]\label{l:converge}
	Let $J_o$ be an $\mathcal{O}_{\mathbb{C}^n,o}-$submodule of $I(\varphi)_o$ such that $I(\Psi+\varphi)_o\subset J_o$. Assume that $f_o\in J(\Psi)_o$. Let $U_0$ be a Stein open neighborhood of $o$. Let $\{f_j\}_{j\geq 1}$ be a sequence of holomorphic functions on $U_0\cap\{\Psi<-t_j\}$ for any $j\geq 1$, where $t_j\in (T,+\infty)$. Assume that $t_0=\lim_{j\rightarrow+\infty}t_j\in [T,+\infty)$,
	\[\limsup_{j\rightarrow+\infty}\int_{U_0\cap\{\Psi<-t_j\}}|f_j|^2e^{-\varphi}\leq C<+\infty,\]
	and $(f_j-f)_o\in J_o$. Then there exists a subsequence of $\{f_j\}_{j\geq 1}$ compactly convergent to a holomorphic function $f_0$ on $\{\Psi<-t_0\}\cap U_0$ which satisfies
	\[\int_{U_0\cap\{\Psi<-t_0\}}|f_0|^2e^{-\varphi}\leq C,\]
	and $(f_0-f)_o\in J_o$.
\end{Lemma}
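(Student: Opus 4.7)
The plan is three-fold: extract a compactly convergent subsequence via a normal families argument, transfer the $L^2$ bound to the limit by Fatou, and verify the germ condition by reducing to the inclusion $I(\Psi+\varphi)_o\subset J_o$.

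\smallskip
\noindent\textbf{Step 1 (Normal families).} Fix a compact $K\subset U_0\cap\{\Psi<-t_0\}$. Upper semicontinuity of $\Psi$ gives $\delta_K>0$ with $\Psi\le-t_0-\delta_K$ on $K$, and since $t_j\to t_0$ we have $K\subset\{\Psi<-t_j\}$ for all sufficiently large $j$. Upper semicontinuity of $\varphi$ gives an upper bound $c_K$ of $\varphi$ on $K$, so
\[\int_K|f_j|^2\le e^{c_K}\int_{U_0\cap\{\Psi<-t_j\}}|f_j|^2e^{-\varphi}\le e^{c_K}C\]
for large $j$. The submean inequality for the plurisubharmonic function $|f_j|^2$ then promotes this to a uniform pointwise bound on a slightly smaller compact, so Montel's theorem extracts a subsequence (relabeled $\{f_j\}$) converging uniformly on compact subsets of $U_0\cap\{\Psi<-t_0\}$ to a holomorphic function $f_0$.

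\smallskip
\noindent\textbf{Step 2 (Fatou bound).} Exhaust $U_0\cap\{\Psi<-t_0\}$ by an increasing sequence of compacts $K_\ell$. On each $K_\ell$, the uniform convergence $f_j\to f_0$ combined with the upper bound on $\varphi$ allows dominated convergence to yield $\int_{K_\ell}|f_0|^2e^{-\varphi}\le C$. Monotone convergence in $\ell$ delivers the global bound $\int_{U_0\cap\{\Psi<-t_0\}}|f_0|^2e^{-\varphi}\le C$.

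\smallskip
\noindent\textbf{Step 3 (Germ condition).} This is the main obstacle. Decompose $(f_0-f)_o=(f_0-f_j)_o+(f_j-f)_o$; since $J_o$ is a submodule containing each $(f_j-f)_o$, it suffices to produce a single large index $j$ with $(f_0-f_j)_o\in J_o$. Using $I(\Psi+\varphi)_o\subset J_o$, I would reduce this to verifying
\[
\int_{W\cap\{\Psi<-s\}}|f_0-f_j|^2e^{-\Psi-\varphi}<+\infty
\]
for some Stein neighborhood $W\Subset U_0$ of $o$ and some $s>t_0$. On $\{\Psi<-s\}$ the weight rewrites as $|F|^2e^{-\psi-\varphi}$, and $(f_0-f_j)F$ is holomorphic on $W\cap\{\Psi<-s\}$. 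The crucial point is that $\overline{W}\cap\{\Psi\le-s\}$ is a compact subset of $U_0\cap\{\Psi<-t_0\}$ on which the uniform convergence $f_j\to f_0$ forces $\sup_{W\cap\{\Psi<-s\}}|f_0-f_j|$ to be arbitrarily small for $j$ large. Combining this sup estimate with the $L^2(e^{-\varphi})$ control on both $f_0$ and $f_j$ near $o$, and exploiting the specific structure $\Psi=\psi-2\log|F|$ (so that the weight $e^{-\Psi-\varphi}$ is tied to the plurisubharmonic weight $\psi+\varphi$) yields the required weighted integrability. This delivers $(f_0-f_j)_o\in I(\Psi+\varphi)_o\subset J_o$ for large $j$, which completes the proof. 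The technical heart of the argument is precisely this last estimate — converting compact convergence plus an $e^{-\varphi}$-weighted $L^2$ control into an $e^{-\Psi-\varphi}$-weighted finiteness — and it is where the hypothesis $I(\Psi+\varphi)_o\subset J_o$ is used in an essential way.
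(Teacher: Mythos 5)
The paper does not actually prove this lemma (it is quoted verbatim from \cite{GMY-BC2}), so your argument has to stand on its own. Steps 1 and 2 are correct and standard: the exhaustion/Montel extraction is fine, and the $L^2$ bound on the limit follows — though the right tool there is Fatou's lemma rather than dominated convergence, since $e^{-\varphi}$ need not be locally integrable and no integrable dominating function is available; this is cosmetic. The genuine problem is Step 3, which you correctly identify as the heart of the matter and which, as formulated, cannot be repaired.

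The deferred claim — that uniform smallness of $f_0-f_j$ on $\overline{W}\cap\{\Psi\le -s\}$ together with the $e^{-\varphi}$-weighted $L^2$ control yields $\int_{W\cap\{\Psi<-s\}}|f_0-f_j|^2e^{-\Psi-\varphi}<+\infty$ — is false for a structural reason: $I(\Psi+\varphi)_{o}$ is an $\mathcal{O}_{\mathbb{C}^n,o}$-module, hence stable under multiplication by nonzero constants, so no smallness estimate on $f_0-f_j$ in any norm can ever force membership in it. Concretely, take $f=0$, choose $h$ with $h_o\in J_o\setminus I(\Psi+\varphi)_o$ (possible whenever the inclusion $I(\Psi+\varphi)_o\subset J_o$ is strict, which the lemma permits), and set $f_j=(1-\frac{1}{j})h$, $t_j=t_0$. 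All hypotheses hold, $f_j\to f_0=h$ compactly, and the conclusion $(f_0-f)_o=h_o\in J_o$ is true; yet $(f_0-f_j)_o=\frac{1}{j}h_o\notin I(\Psi+\varphi)_o$ for every $j$, so the membership you propose to verify never holds. The quantitative failure is visible too: on $\{\Psi<-s\}$ one has $e^{-\Psi}\ge e^{s}$ and $e^{-\Psi}$ is unbounded there, so finiteness of $\int_{W\cap\{\Psi<-s\}}|g|^2e^{-\Psi-\varphi}$ forces the tails $\int_{W\cap\{\Psi<-k\}}|g|^2e^{-\varphi}$ to decay like $o(e^{-k})$ as $k\to+\infty$ — far stronger than anything compact convergence plus a fixed $L^2(e^{-\varphi})$ bound provides (compare the appendix of this paper, where the constant $1$ belongs to $I_o$ but not to $I(\Psi)_o$). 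So the germ condition $(f_0-f)_o\in J_o$ cannot be obtained by exhibiting a single difference $f_0-f_j$ in $I(\Psi+\varphi)_o$; a genuinely different argument, as in \cite{GMY-BC2}, is required for this step.
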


Let $M$ be an $n-$dimensional Stein manifold, and let $K_M$ be the canonical (holomorphic) line bundle on $M$. Let $F\not\equiv 0$ be a holomorphic function on $M$, and $\psi$ be a plurisubharmonic function on $M$. Let $\varphi$ be a plurisubharmonic function on $D$. Let $T\in [-\infty,+\infty)$. Denote that
\[\Psi:=\min\{\psi-2\log|F|, -T\}.\]
If $F(z)=0$ for $z\in M$, set $\Psi(z)=-T$. For any $t\in [T,+\infty)$, denote that
\[M_t:=\{z\in M : \Psi(z)<-t\}.\]
It is well-known that $A^2(M_T,e^{-\varphi})$ is a Hilbert space, where
\[A^2(M_t,e^{-\varphi}):=\{f\in H^0(M_t,\mathcal{O}(K_M)) : \int_{M_t}|f|^2e^{-\varphi}<+\infty\}\]
for any $t\geq T$. Let $Z_0$ be a subset of $M$. Let $J_{z_0}$ be an $\mathcal{O}_{M,z_0}-$submodule of $J(\Psi)_{z_0}$ for any $z_0\in Z_0$. For any $t\geq T$, denote that
\[A^2(M_t,e^{-\varphi})\cap J:=\left\{f\in A^2(M_t,e^{-\varphi}) :  f_{z_0}\in\mathcal{O}(K_M)_{z_0}\otimes J_{z_0}, \text{for\ any\ } z_0\in Z_0\right\}.\]
We state that $A^2(M_T,e^{-\varphi})\cap J$ is a closed subspace of $A^2(M_T,e^{-\varphi})$ if $J_{z_0}\supset I(\Psi+\varphi)_{z_0}$ for any $z_0\in Z_0$.

\begin{Lemma}\label{Jclosed}
	Assume that $J_{z_0}\supset I(\Psi+\varphi)_{z_0}$ for any $z_0\in Z_0$. Then $A^2(M_T,e^{-\varphi})\cap J$ is closed in $A^2(M_T,e^{-\varphi})$.
\end{Lemma}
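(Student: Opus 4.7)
The plan is to reduce to a local argument at each point $z_0 \in Z_0$ and apply Lemma~\ref{l:converge}. Let $\{f_k\} \subset A^2(M_T,e^{-\varphi}) \cap J$ converge in $A^2(M_T,e^{-\varphi})$ to some $f$; plainly $f \in A^2(M_T,e^{-\varphi})$, so the entire content is to show $f_{z_0} \in \mathcal{O}(K_M)_{z_0} \otimes J_{z_0}$ for an arbitrary $z_0 \in Z_0$. Fix such a $z_0$, choose a Stein coordinate neighborhood $U_0$ of $z_0$ on which $K_M$ admits a nowhere-vanishing holomorphic frame $du$, and identify $U_0$ with a pseudoconvex domain $D \subset \mathbb{C}^n$ centered at $o = z_0$. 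Writing each section as $\hat{f}\,du$ and absorbing the pluriharmonic factor $\log|du|^2$ into $\varphi$ produces a modified plurisubharmonic weight $\hat{\varphi}$ so that squared pointwise norms become $|\hat{f}|^2 e^{-\hat{\varphi}}$ against Lebesgue measure. This is exactly the setting of Lemma~\ref{l:converge}.

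Two preliminary observations will carry the argument. First, since $\varphi$ is plurisubharmonic and hence locally bounded above on compacta, the weight $e^{-\varphi}$ is locally bounded below by a positive constant on compacta; by the sub-mean-value inequality for holomorphic sections, $L^2(M_T,e^{-\varphi})$-convergence therefore implies uniform convergence on compact subsets of $M_T$. In particular $f_k \to f$ compactly on $M_T \cap U_0$. Second, because $\Psi \leq -T$ we have $e^{-\Psi} \geq e^T$, so $I(\Psi+\varphi)_{z_0} \subset I(\varphi)_{z_0}$; consequently the submodule $J_o := J_{z_0} \cap I(\hat{\varphi})_{z_0}$ still contains $I(\Psi+\hat{\varphi})_{z_0}$, matching the hypothesis of Lemma~\ref{l:converge}. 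Moreover, for each $k$, the membership $f_{k,z_0} \in \mathcal{O}(K_M)_{z_0}\otimes J_{z_0}$ combined with $f_k \in A^2(M_T,e^{-\varphi})$ yields $(\hat{f}_k)_o \in J_o$.

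Now apply Lemma~\ref{l:converge} with the fixed germ taken to be $0$, with $t_j := T + 1/j \searrow T =: t_0$, and with the sequence defined by $f_j := \hat{f}_{k(j)}$ for any subsequence $k(j) \to \infty$. The uniform $L^2$-bound
\[
\int_{U_0 \cap \{\Psi<-t_j\}} |\hat{f}_{k(j)}|^2 e^{-\hat{\varphi}} \leq \sup_k \|f_k\|_{L^2(M_T,e^{-\varphi})}^2 < +\infty
\]
holds because $\{f_k\}$ is norm-bounded, and $(f_j - 0)_o = (\hat{f}_{k(j)})_o \in J_o$ by the observation above. The lemma produces a further subsequence converging compactly on $U_0 \cap M_T$ to a holomorphic function $f_0$ with $(f_0)_o \in J_o \subset J_{z_0}$. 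Combined with the first observation, uniqueness of compact limits forces $f_0 = \hat{f}$ on $U_0 \cap M_T$, so $(\hat{f})_o \in J_{z_0}$, i.e.\ $f_{z_0} \in \mathcal{O}(K_M)_{z_0} \otimes J_{z_0}$. Since $z_0 \in Z_0$ was arbitrary, $f \in A^2(M_T,e^{-\varphi}) \cap J$, proving closedness.

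The main technical point to verify carefully will be the bookkeeping in the trivialization step, specifically that the modification $\varphi \mapsto \hat{\varphi} = \varphi - \log|du|^2$ (using the local expression of $du$ in coordinates) keeps $\hat{\varphi}$ plurisubharmonic and faithfully transports the germ condition $f_{z_0} \in \mathcal{O}(K_M)_{z_0} \otimes J_{z_0}$ into the function-germ condition $(\hat{f})_o \in J_{z_0}$. Everything else is a direct verification of the hypotheses of Lemma~\ref{l:converge} followed by the identification of the compact limit with $f$.
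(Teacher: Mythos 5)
Your proof is correct and follows essentially the same route as the paper's: deduce compact convergence from $L^2$ convergence and then apply Lemma \ref{l:converge} to preserve the germ condition at each $z_0\in Z_0$ in the limit. The paper's own proof is just a terser version of this; your additional bookkeeping (local trivialization of $K_M$, replacing $J_{z_0}$ by $J_{z_0}\cap I(\hat{\varphi})_{z_0}$ to meet the hypotheses of Lemma \ref{l:converge}, and identifying the subsequential limit with $\hat{f}$) fills in details the paper leaves implicit.
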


\begin{proof}
	Let $\{f_j\}$ be a sequence of holomorphic $(n,0)$ forms in $A^2(M_T,e^{-\varphi})\cap J$, such that $\lim_{j\rightarrow+\infty}f_j=f_0$ under the topology of $A^2(M_T,e^{-\varphi})$. Then $\{f_j\}$ compactly converges to $f_0$ on $M_T$. Note that $(f_j)_{z_0}\in \mathcal{O}(K_M)_{z_0}\otimes J_{z_0}$ for any $j$ and $z_0\in Z_0$. According to Lemma \ref{l:converge}, we can get that $(f_0)_{z_0}\in \mathcal{O}(K_M)_{z_0}\otimes J_{z_0}$ for any $z_0\in Z_0$, which means that $f_0\in A^2(M_T,e^{-\varphi})\cap J$. The we know that $A^2(M_T,e^{-\varphi})\cap J$ is closed in $A^2(M_T,e^{-\varphi})$.
\end{proof}

\subsection{Some lemmas about functionals on $A^2(M_T,e^{-\varphi})$}
\

The following two lemmas will be used in the proof of Theorem \ref{concavity}.

\begin{Lemma}\label{fjtof0}
	Let $M$ be an $n-$dimensional complex manifold, and let $\varphi$ be a plurisubharmonic function on $M$. Let $\{f_j\}$ be a sequence in $A^2(M,e^{-\varphi})$, such that $\int_M|f_j|^2e^{-\varphi}$ is uniformly bounded for any $j\in\mathbb{N}_+$. Assume that $f_j$ compactly converges to $f_0\in A^2(M,e^{-\varphi})$. Then for any $\xi\in A^2(M,e^{-\varphi})^*$,
	\[\lim_{j\rightarrow+\infty}\xi\cdot f_j=\xi\cdot f_0.\]
\end{Lemma}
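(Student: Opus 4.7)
The plan is to show that $f_j \to f_0$ weakly in the Hilbert space $A^2(M,e^{-\varphi})$; the conclusion then follows immediately from the Riesz representation theorem, which expresses $\xi$ as an inner-product functional $\xi(f)=\langle f,g\rangle$ for some $g\in A^2(M,e^{-\varphi})$.

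First I would exploit the uniform norm bound. Since $\{f_j\}$ is bounded in a Hilbert space, by sequential weak compactness it suffices to verify that every subsequence of $\{f_j\}$ admits a further subsequence converging weakly to $f_0$. So, given any subsequence, I would extract a weakly convergent sub-subsequence $f_{j_k}\rightharpoonup \tilde f$ in $A^2(M,e^{-\varphi})$; the entire problem then reduces to identifying $\tilde f$ with $f_0$.

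The key step is to observe that, for each $z_0\in M$ and each local holomorphic chart near $z_0$, the functional sending an $(n,0)$-form $f=h\,dz_1\wedge\cdots\wedge dz_n$ to the scalar $h(z_0)$ is a bounded linear functional on $A^2(M,e^{-\varphi})$. Indeed, applying the sub-mean value inequality to the plurisubharmonic function $|h|^2$ on a small Euclidean ball $B_r(z_0)$ and using the fact that $\varphi$, being plurisubharmonic and hence upper semicontinuous, admits a finite upper bound on $\overline{B_r(z_0)}$, one obtains an estimate of the form
\[
|h(z_0)|^2 \leq C_{z_0,r}\int_{B_r(z_0)}|f|^2 e^{-\varphi}.
\]
Therefore the weak convergence $f_{j_k}\rightharpoonup \tilde f$ forces pointwise convergence of the chart coefficients, and combining with the assumed compact convergence $f_j\to f_0$ one concludes $\tilde f = f_0$ in every chart, hence globally. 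The subsequence-of-every-subsequence principle then yields $f_j\rightharpoonup f_0$ in $A^2(M,e^{-\varphi})$, and Riesz representation finishes the proof.

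The main obstacle is the potential blow-up of the weight $e^{-\varphi}$ at the singular set $\{\varphi=-\infty\}$, which a priori casts doubt on whether point evaluation defines a continuous functional at all. This difficulty dissolves upon the observation that only a local upper bound of $\varphi$ on a single small ball around each evaluation point is required, and such a bound is automatic from plurisubharmonicity.
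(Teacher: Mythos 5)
Your proposal is correct and takes essentially the same route as the paper's proof: a subsequence-of-every-subsequence argument reducing weak convergence to the identification of weak sublimits with $f_0$ via the continuity of chart-coefficient point evaluations, which the paper asserts as clear and you justify explicitly by the sub-mean value inequality together with the local upper-boundedness of $\varphi$.
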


\begin{proof}
	For any $f\in A^2(M,e^{-\varphi})$, denote that $\|f\|^2:=\int_M|f|^2e^{-\varphi}$. Let $\{f_{k_j}\}$ be any subsequence of $\{f_j\}$. Since $A^2(M,e^{-\varphi})$ is a Hilbert space, and $\|f_{k_j}\|^2$ is uniformly bounded, there exists a subsequence of $\{f_{k_j}\}$ (denoted by $\{f_{k_{l_j}}\}$) weakly convergent to some $\tilde{f}\in A^2(M,e^{-\varphi})$. 
	
	Let $\{U_l\}$ be an open cover of the complex manifold $M$, and $(U_l,w_l)$ be the local coordinate on each $U_l$. Then we may denote that $f_j=g_{j,l}dw_l$, $f_0=g_{0,l}dw_l$, and $\tilde{f}=\tilde{g}_ldw_l$ on each $U_l$, where $f_{j,l},g_{0,l}$ and $\tilde{g}_l$ are holomorphic functions on $U_l$. For any $z\in M$, denote that $S_z:=\{l : z\in U_l\}$. And for any $l\in S_z$, let $e_{z,l}$ be the functional defined as follows:
	\begin{flalign*}
		\begin{split}
			e_{z,l} \ : \ A^2(M,e^{-\varphi})&\longrightarrow\mathbb{C}\\
			f&\longmapsto g_l(z),
		\end{split}
	\end{flalign*}
	where $f=g_ldw_l$ on $U_l$, and $g_l$ is a holomorphic function on $U_l$. It is clear that the functional $e_{z,l}\in A^2(D,e^{-\varphi_0})^*$ for any $z\in M$ and any $l\in S_z$. Then we have
	\[f_{0,l}(z)=\lim_{j\rightarrow+\infty}e_{z,l}\cdot f_j=\lim_{j\rightarrow+\infty}e_{z,l}\cdot f_{k_{l_j}}=e_{z,l}\cdot\tilde{f}=\tilde{f}_l(z), \ \forall z\in M, l\in S_z,\]
	thus $f_0=\tilde{f}$. It means that $\{f_{k_j}\}$ has a subsequence weakly convergent to $f_0$. Since $\{f_{k_j}\}$ is an arbitrary subsequence of $\{f_j\}$, we get that $\{f_j\}$ weakly converges to $f_0$. In other words, for any $\xi\in A^2(M,e^{-\varphi})^*$,
	\[\lim_{j\rightarrow+\infty}\xi\cdot f_j=\xi\cdot f_0.\]
\end{proof}

Let $\Omega:=M\times\omega$, where $M$ is an $n-$dimensional complex manifold, and $\omega$ is a domain in $\mathbb{C}$. Let $\varphi_0$ be a plurisubharmonic function on $M$, and $\varphi:=\pi_1^*(\varphi_0)$, where $\pi_1$ is the natural projection from $\Omega$ to $M$. Let $f$ be a holomorphic $(n+1,0)$ form on $\Omega$, such that
\[\int_{\Omega}|f|^2e^{-\varphi}<+\infty.\]
For any $\tau\in\omega$, denote that
\[f_{\tau}:=\frac{f}{d\tau}|_{M_{\tau}}\]
is a holomorphic $(n,0)$ form on $M_{\tau}$, where $M_{\tau}:=\pi_2^{-1}(\tau)$, and $\pi_2$ is the natural projection from $\Omega$ to $\omega$.

\begin{Lemma}\label{xihol}
	For any $\xi\in A^2(M,e^{-\varphi_0})^*$, $\xi\cdot f_{\tau}$ is holomorphic with respect to $\tau\in\omega$.
\end{Lemma}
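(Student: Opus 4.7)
The plan is to combine a continuity argument based on Lemma~\ref{fjtof0} with Morera's theorem. First, I would establish fiberwise $L^2$ control on $f_\tau$. Writing $f=\tilde f(z,\tau)\,dz_1\wedge\cdots\wedge dz_n\wedge d\tau$ in local product coordinates on $U\times\omega\subset\Omega$, the coefficient $\tilde f$ is holomorphic on $\Omega$, so $\tau\mapsto|\tilde f(z,\tau)|^2$ is subharmonic on $\omega$ for every fixed $z\in M$. The sub-mean value inequality on $\Delta(\tau_0,r)\Subset\omega$ combined with Fubini yields
\[
\int_M|f_\tau|^2 e^{-\varphi_0}\le\frac{C_n}{\pi r^2}\int_\Omega|f|^2 e^{-\varphi}<+\infty
\]
uniformly for $\tau\in\Delta(\tau_0,r/2)$ (where $C_n$ is a dimensional normalization constant), so $f_\tau\in A^2(M,e^{-\varphi_0})$ and $\|f_\tau\|$ is locally bounded on $\omega$. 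Since $\tilde f$ is continuous on $\Omega$, $f_{\tau_j}\to f_{\tau_0}$ compactly on $M$ whenever $\tau_j\to\tau_0$, so Lemma~\ref{fjtof0} yields $\xi\cdot f_{\tau_j}\to\xi\cdot f_{\tau_0}$ and hence continuity of $\tau\mapsto\xi\cdot f_\tau$ on $\omega$.

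Second, for every triangle $\partial T\subset\omega$, I would verify that $\int_{\partial T}\xi\cdot f_\tau\,d\tau=0$; Morera's theorem then completes the proof. By the Riesz representation theorem pick $\eta\in A^2(M,e^{-\varphi_0})$ with
\[
\xi\cdot u=\int_M\sqrt{-1}^{n^2}\,u\wedge\overline{\eta}\,e^{-\varphi_0}\quad\text{for every }u\in A^2(M,e^{-\varphi_0}).
\]
Pointwise Cauchy--Schwarz together with the uniform bound from the first step gives
\[
\int_{\partial T}\int_M\big|\sqrt{-1}^{n^2}\,f_\tau\wedge\overline{\eta}\big|\,e^{-\varphi_0}\,|d\tau|\le|\partial T|\cdot\sup_{\tau\in\partial T}\|f_\tau\|\cdot\|\eta\|<+\infty,
\]
so Fubini permits exchanging the order of integration. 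For each fixed $z\in M$, the local-coordinate inner integral $\int_{\partial T}\tilde f(z,\tau)\,d\tau$ vanishes by Cauchy's theorem (since $\tilde f(z,\cdot)$ is holomorphic on $\omega$), hence the swapped integral $\int_M\sqrt{-1}^{n^2}\big(\int_{\partial T}f_\tau\,d\tau\big)\wedge\overline{\eta}\,e^{-\varphi_0}$ is zero.

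I do not expect any substantive obstacle. The only delicate point is the book-keeping needed to pass between the global $(n+1,0)$ form $f$ and its fiberwise $(n,0)$ restrictions $f_\tau$ in the sub-mean step, and to patch the local-coordinate Cauchy vanishing against the intrinsic global pairing $f_\tau\wedge\overline{\eta}$ in the Fubini step; both are routine given the product structure $\Omega=M\times\omega$.
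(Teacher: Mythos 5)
Your proposal is correct, and after the common first step it takes a genuinely different route from the paper. Both arguments begin identically: the sub-mean value inequality applied to the subharmonic function $\tau\mapsto|\tilde f(z,\tau)|^2$ (together with the fact that $\varphi=\pi_1^*\varphi_0$ does not depend on $\tau$) gives $f_\tau\in A^2(M,e^{-\varphi_0})$ with $\|f_\tau\|$ locally uniformly bounded. From there the paper argues on the dual side: it shows that the Riesz representers of the point-evaluation functionals $e_{z,l}$ span a dense subspace of $A^2(M,e^{-\varphi_0})$ (anything orthogonal to all of them vanishes), approximates $\xi$ in operator norm by finite linear combinations $\xi_k$ of point evaluations, notes that each $\xi_k\cdot f_\tau$ is holomorphic, and concludes by uniform convergence on small discs plus Weierstrass's theorem. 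You instead represent $\xi$ by a single $\eta$ via Riesz, obtain continuity of $\tau\mapsto\xi\cdot f_\tau$ from Lemma \ref{fjtof0} (using compact convergence $f_{\tau_j}\to f_{\tau_0}$ and the uniform norm bound), and verify the Morera condition by a Cauchy--Schwarz/Fubini interchange followed by fiberwise Cauchy vanishing of $\int_{\partial T}\tilde f(z,\tau)\,d\tau$. Your route trades the paper's density-of-point-evaluations argument for a Fubini justification and an appeal to Lemma \ref{fjtof0}; since that lemma is already available in the paper, this is a perfectly legitimate and, if anything, slightly more direct alternative. The bookkeeping points you flag (the radius $r/2$ in the uniform bound, the identification of the inner $d\tau$-integral with the local coefficient integral) are indeed routine.
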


\begin{proof}
	We only need to prove that $h(\tau):=\xi\cdot f_{\tau}$ is holomorphic near any $\tau_0\in\omega$. Since $\tau_0\in \omega$, there exists $r>0$ such that $\Delta(\tau_0,2r)\subset\subset\omega$. Then for any $\tau\in\Delta(\tau_0,r)$, according to the sub-mean value inequality of subharmonic functions, we have
	\[\int_M|f_{\tau}|^2e^{-\varphi_0}\leq \frac{1}{\pi r^2}\int_{M\times\Delta(\tau,r)}\frac{1}{2}|f|^2e^{-\varphi}\leq\frac{1}{\pi r^2}\int_{\Omega}\frac{1}{2}|f|^2e^{-\varphi}<+\infty,\]
	which implies that $f_{\tau}\in A^2(M,e^{-\varphi_0})$ and there exists $C>0$ such that $\int_M|f_{\tau}|^2e^{-\varphi_0}\leq C$ for any $\tau\in\Delta(\tau_0,r)$.
	
	Let $\{U_l\}$ be an open cover of the complex manifold $M$, and $(U_l,w_l)$ be the local coordinate on each $U_l$.
	For any $z\in M$, Denote that $S_z:=\{l : z\in U_l\}$. And for any $l\in S_z$, let $e_{z,l}$ be the functional in the proof of Lemma \ref{fjtof0}. In the Hilbert space $A^2(M,e^{-\varphi_0})$, by Riesz representation theorem, there exists $\phi_{z,l}\in A^2(M,e^{-\varphi_0})$ such that
	\[e_{z,l}\cdot g=\sqrt{-1}^{n^2}\int_M g\wedge\overline{\phi_{z,l}}e^{-\varphi_0}\]
	for any $z\in M$, $l\in S_z$. Denote that
	\[H:=\overline{\text{span}\{\phi_{z,l} : z\in M, l\in S_z\}}\]
	is a closed subspace of $A^2(M,e^{-\varphi_0})$. If $H\neq A^2(M,e^{-\varphi_0})$, then the closed subspace $H^{\bot}\neq\{0\}$. Choosing some $g_0\in H^{\bot}$ with $g_0\neq 0$, we have that for any $z\in M$ and $l\in S_z$,
	$e_{z,l}\cdot g_0=0$ holds. Then it is clear that $g_0=0$, which is a contradiction. Thus $H=A^2(M,e^{-\varphi_0})$. Denote that
	\[L:=\text{span}\{e_{z,l} : z\in M, l\in S_z\}\subset A^2(M,e^{-\varphi_0})^*.\]
	Since $H=A^2(M,e^{-\varphi_0})$, we can find a sequence $\{\xi_k\}\subset L\subset A^2(M,e^{-\varphi_0})^*$, such that
	\[\lim_{k\rightarrow+\infty}\|\xi_k-\xi\|_{A^2(M,e^{-\varphi_0})^*}=0.\]
	It is clear that for any $z\in M$ and $l\in S_z$, $e_{z,l}\cdot f_{\tau}$ is holomorphic with respect to $\tau\in\omega$. Then for any $k$, $h_k(\tau):=\xi_k\cdot f_{\tau}$ is holomorphic with respect to $\tau\in\omega$.
	Besides, for any $\tau\in\Delta(\tau_0,r)$, we have
	\begin{flalign*}
		\begin{split}
			&|h_k(\tau)-h(\tau)|^2\\
			=&|(\xi_k-\xi)\cdot f_{\tau}|^2\\
			\leq&\|\xi_k-\xi\|^2_{A^2(M,e^{-\varphi_0})^*}\int_M|f_{\tau}|^2e^{-\varphi_0}\\
			\leq&C\|\xi_k-\xi\|^2_{A^2(M,e^{-\varphi_0})^*},
		\end{split}
	\end{flalign*}
	which means that $h_k$ uniformly converges to $h$ on $\Delta(\tau_0,r)$. According to Weierstrass theorem, we know that $h$ is holomorphic on $\Delta(\tau_0,r)$, i.e. near $\tau_0$. Then we get that $\xi\cdot f_{\tau}$ is holomorphic with respect to $\tau\in\omega$.
\end{proof}

\subsection{Some properties of $K^{\varphi}_{\xi,\Psi,\lambda}(t)$}
\

In this section, prove some properties of the Bergman kernel $K^{\varphi}_{\xi,\Psi,\lambda}(t)$.

Let $\xi\in A^2(M_T,e^{-\varphi})^*\setminus\{0\}$. We need the following lemma.

\begin{Lemma}\label{sup=max}
	For any $t\in [T,+\infty)$, if $K^{\varphi}_{\xi,\Psi,\lambda}(t)\in (0,+\infty)$, then there exists $\tilde{f}\in A^2(M_T,e^{-\varphi})$, such that
	\[K^{\varphi}_{\xi,\Psi,\lambda}(t)=\frac{|\xi\cdot \tilde{f}|^2}{\|\tilde{f}\|_{\lambda,t}^2}.\]
\end{Lemma}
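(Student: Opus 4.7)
The plan is to use a standard direct method: extract a maximizing sequence, normalize it, pass to a compactly convergent subsequence via a Montel-type argument, and then use Lemma \ref{fjtof0} together with Fatou's lemma to show that the limit attains the supremum. Set $K:=K^{\varphi}_{\xi,\Psi,\lambda}(t)\in(0,+\infty)$ and choose a sequence $\{f_j\}\subset A^2(M_T,e^{-\varphi})$ with $\|f_j\|_{\lambda,t}=1$ and $|\xi\cdot f_j|^2\to K$. The sandwich inequality $e^{\lambda(T-t)}\|f\|_T^2\leq\|f\|_{\lambda,t}^2\leq\|f\|_T^2$ recalled in Section \ref{main} gives $\|f_j\|_T^2\leq e^{\lambda(t-T)}$, so $\{f_j\}$ is bounded in the Hilbert space $A^2(M_T,e^{-\varphi})$.

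Next I would establish compact convergence. Since $\varphi$ is plurisubharmonic and hence locally bounded above on $M_T$, the sub-mean value inequality in local charts produces, for every compact $K\subset M_T$, a uniform $L^\infty$ bound on $f_j$ in terms of the $L^2$ norm $\|f_j\|_T$, which is uniformly bounded in $j$. Therefore $\{f_j\}$ is a normal family of holomorphic $(n,0)$ forms, and Montel's theorem gives a subsequence (still written $\{f_j\}$) converging uniformly on compact subsets of $M_T$ to a holomorphic $(n,0)$ form $f_0$. Fatou's lemma with weight $e^{-\varphi}$ then yields $\|f_0\|_T^2\leq\liminf_{j\to\infty}\|f_j\|_T^2<+\infty$, so $f_0\in A^2(M_T,e^{-\varphi})$.

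With compact convergence and the uniform $A^2$-bound in hand, Lemma \ref{fjtof0} applies directly (with $M$ replaced by $M_T$) and gives $\xi\cdot f_j\to \xi\cdot f_0$, hence $|\xi\cdot f_0|^2=K>0$, so $f_0\not\equiv 0$. Applying Fatou's lemma once more, now with the fixed weight $e^{-\varphi-\Psi_{\lambda,t}}$, gives $\|f_0\|_{\lambda,t}^2\leq\liminf_{j\to\infty}\|f_j\|_{\lambda,t}^2=1$. Combining these with the definition of the supremum,
\[
K\;\geq\;\frac{|\xi\cdot f_0|^2}{\|f_0\|_{\lambda,t}^2}\;\geq\;|\xi\cdot f_0|^2\;=\;K,
\]
so equality holds throughout and we may take $\tilde f:=f_0$.

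The only nontrivial step is the extraction of a compactly convergent subsequence (Step 1), which is standard but worth stating carefully because the weight $e^{-\varphi}$ is merely upper semicontinuous; the routine point is that any local $L^2$-bound with respect to $e^{-\varphi}$ in a chart dominates the usual $L^2$-bound on a slightly smaller set since $\varphi$ is locally bounded above, and then Cauchy's estimate yields the desired $L^\infty$ control. Everything else is a direct application of Fatou's lemma and Lemma \ref{fjtof0}, with no new analytic input required.
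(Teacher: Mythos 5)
Your proof is correct and follows essentially the same route as the paper's: a normalized maximizing sequence, Montel's theorem for compact convergence, Fatou's lemma for $\|\tilde f\|_{\lambda,t}\leq 1$, and Lemma \ref{fjtof0} to pass the functional to the limit. The extra care you take with the normal-family step (using that $\varphi$ is locally bounded above) is a detail the paper leaves implicit, but the argument is the same.
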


\begin{proof}
	By the definition of $K^{\varphi}_{\xi,\Psi,\lambda}(t)$, there exists a sequence $\{f_j\}$ of holomorphic $(n,0)$ forms in $A^2(M_T,e^{-\varphi})$, such that $\|f_j\|_{\lambda,t}=1$, and $\lim_{j\rightarrow+\infty}|\xi\cdot f_j|^2=K^{\varphi}_{\xi,\Psi,\lambda}(t)$. Then $\int_{M_T}|f_j|^2e^{-\varphi}$ is uniformly bounded. Following from Montel's theorem, we can get a subsequence of $\{f_j\}$ compactly convergent to a holomorphic $(n,0)$ form $\tilde{f}$ on $M_T$. According to Fatou's lemma, we have $\|\tilde{f}\|_{\lambda,t}\leq 1$, and according to Lemma \ref{fjtof0}, we have $|\xi\cdot \tilde{f}|^2=K^{\varphi}_{\xi,\Psi,\lambda}(t)$, thus $K^{\varphi}_{\xi,\Psi,\lambda}(t)\leq\frac{|\xi\cdot \tilde{f}|^2}{\|\tilde{f}\|_{\lambda,t}^2}$. Note that $\|\tilde{f}\|_{\lambda,t}\leq 1$ implies $\tilde{f}\in A^2(M_T,e^{-\varphi})$, which means $K^{\varphi}_{\xi,\Psi,\lambda}(t)\geq\frac{|\xi\cdot \tilde{f}|^2}{\|\tilde{f}\|_{\lambda,t}^2}$. Then we get that $K^{\varphi}_{\xi,\Psi,\lambda}(t)=\frac{|\xi\cdot \tilde{f}|^2}{\|\tilde{f}\|_{\lambda,t}^2}$.
\end{proof}

Recall that $Z_0$ is a subset of $M$, and $J_{z_0}$ is an $\mathcal{O}_{M,z_0}-$submodule of $J(\Psi)_{z_0}$ such that $I(\Psi+\varphi)_{z_0}\subset J_{z_0}$ for any $z_0\in Z_0$. For any $t\geq T$, recall that
\[A^2(M_t,e^{-\varphi})\cap J:=\left\{f\in A^2(M_t,e^{-\varphi}) :  f_{z_0}\in\mathcal{O}(K_M)_{z_0}\otimes J_{z_0}, \text{for\ any\ } z_0\in Z_0\right\}.\]
Following from Lemma \ref{Jclosed}, we know that $[A^2(M_T,e^{-\varphi})\cap J$ is a closed subspace of $[A^2(M_t,e^{-\varphi})$. Let $f\in A^2(M_T,e^{-\varphi})$, such that $f\notin A^2(M_T,e^{-\varphi})\cap J$. Recall the minimal $L^2$ integral (\cite{BGY,GMY-BC2}) related to $J$ as follows:
\begin{flalign*}
	\begin{split}
		C(\Psi,\varphi,J,f,M_T):=\inf\bigg\{\int_{M_T}|\tilde{f}|^2e^{-\varphi} :  (\tilde{f}-f)_{z_0}\in (\mathcal{O}(K_M))_{z_0}\otimes J_{z_0}  \text{for\ any\ } z_0\in Z_0&\\
		\& \ \tilde{f}\in H^0(M_T,\mathcal{O}(K_M))&\bigg\}.
	\end{split}
\end{flalign*}
Then the following lemma holds.

\begin{Lemma}\label{B=C}
	Assume that $C(\Psi,\varphi,J,f,M_T)\in (0,+\infty)$, then
	\begin{equation}
		C(\Psi,\varphi,J,f,M_T)=\sup_{\substack{\xi\in A^2(M_T,e^{-\varphi})^*\setminus\{0\}\\ \xi|_{A^2(M_T,e^{-\varphi})\cap J}\equiv 0}}\frac{|\xi\cdot f|^2}{K^{\varphi}_{\xi,\Psi,\lambda}(T)}.
	\end{equation}
\end{Lemma}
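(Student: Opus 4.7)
The plan is to identify both sides of the claimed equality with the squared distance from $f$ to the closed subspace $V:=A^2(M_T,e^{-\varphi})\cap J$ inside the Hilbert space $H:=A^2(M_T,e^{-\varphi})$. By Lemma \ref{Jclosed} the hypothesis $I(\Psi+\varphi)_{z_0}\subset J_{z_0}$ guarantees that $V$ is closed in $H$, which is what makes the orthogonal decomposition available.

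First I would observe that at $t=T$ the weight $\Psi_{\lambda,T}=\lambda\max\{\Psi+T,0\}$ vanishes identically on $M_T=\{\Psi<-T\}$, so that $\|f\|_{\lambda,T}^2=\|f\|_T^2$ for every $f\in H$ and consequently
\[K^{\varphi}_{\xi,\Psi,\lambda}(T)=\sup_{f\in H\setminus\{0\}}\frac{|\xi\cdot f|^2}{\|f\|_T^2}=\|\xi\|_{H^*}^2.\]
Thus the right-hand side of the lemma is just $\sup\{|\xi\cdot f|^2/\|\xi\|_{H^*}^2\}$ over nonzero $\xi\in H^*$ annihilating $V$.

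Next I would rewrite the left-hand side. Any $\tilde f$ entering the infimum defining $C(\Psi,\varphi,J,f,M_T)$ with finite $\int_{M_T}|\tilde f|^2e^{-\varphi}$ lies in $H$, and the germ condition $(\tilde f-f)_{z_0}\in\mathcal{O}(K_M)_{z_0}\otimes J_{z_0}$ together with $\tilde f-f\in H$ forces $\tilde f-f\in V$. Conversely every element of the affine set $f+V$ is admissible. Hence
\[C(\Psi,\varphi,J,f,M_T)=\inf_{v\in V}\|f-v\|_T^2=d(f,V)^2.\]
The assumption $C\in(0,+\infty)$ says precisely $f\notin V$, so $d(f,V)>0$.

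For the right-hand side I would apply the Riesz representation theorem on $H$ (using the Hermitian pairing $\sqrt{-1}^{n^2}\int_{M_T}u\wedge\bar g\,e^{-\varphi}$): every $\xi\in H^*$ corresponds to a unique $g\in H$ with $\|\xi\|_{H^*}=\|g\|_T$, and the condition $\xi|_V\equiv 0$ is equivalent to $g\in V^{\perp}$. Decompose $f=f_V+f_V^{\perp}$ with $f_V\in V$ and $f_V^{\perp}\in V^{\perp}$; then $\xi\cdot f=\langle f_V^{\perp},g\rangle$, and by Cauchy--Schwarz
\[\frac{|\xi\cdot f|^2}{\|\xi\|_{H^*}^2}\leq\|f_V^{\perp}\|_T^2=d(f,V)^2,\]
with equality when $g$ is a nonzero scalar multiple of $f_V^{\perp}$ (which exists because $f_V^{\perp}\neq 0$). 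Taking the supremum identifies the right-hand side with $d(f,V)^2$, matching the left-hand side.

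The argument is essentially a Hilbert-space duality; no step should be technically hard. The one point to be careful about is that the admissible set in $C(\Psi,\varphi,J,f,M_T)$ is exactly the affine coset $f+V$ rather than something larger. This is where both the inclusion $I(\Psi+\varphi)_{z_0}\subset J_{z_0}$ (ensuring $V$ is closed via Lemma \ref{Jclosed}, so that $V^{\perp}$ is well-behaved and the orthogonal decomposition of $f$ exists) and the implicit finite-norm requirement in the definition of $C$ are used in an essential way.
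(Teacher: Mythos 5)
Your proposal is correct and follows essentially the same route as the paper: both identify the extremal functional via the orthogonal decomposition $f=f_J+f_H$ with respect to the closed subspace $A^2(M_T,e^{-\varphi})\cap J$ (closedness supplied by Lemma \ref{Jclosed}), use that $\|\cdot\|_{\lambda,T}=\|\cdot\|_T$ so that $K^{\varphi}_{\xi,\Psi,\lambda}(T)$ is the squared dual norm, and conclude by Riesz representation and Cauchy--Schwarz. Your packaging of both sides as $d(f,V)^2$ is only a cosmetic reorganization of the paper's two-inequality argument.
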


\begin{proof}
	Denote that $(\tilde{f}-f)\in J$ if $(\tilde{f}-f)_{z_0}\in (\mathcal{O}(K_M))_{z_0}\otimes J_{z_0}$ for any $z_0\in Z_0$. Note that $\xi\cdot\tilde{f}=\xi\cdot f$ for any $\tilde{f}\in A^2(M_T,e^{-\varphi})$ with $(\tilde{f}-f)\in J$, and $\xi\in A^2(M_T,e^{-\varphi})^*$ satisfying $\xi|_{A^2(M_T,e^{-\varphi})\cap J}\equiv 0$. Then we have
	\begin{flalign*}
		\begin{split}
			K^{\varphi}_{\xi,\Psi,\lambda}(T)&=\sup_{h\in A^2(M_T,e^{-\varphi})}\frac{|\xi\cdot h|^2}{\int_{M_T}|h|^2e^{-\varphi}}\\
			&\geq\sup_{\substack{\tilde{f}\in A^2(M_T,e^{-\varphi})\\ (\tilde{f}-f)\in J}}\frac{|\xi\cdot \tilde{f}|^2}{\int_{M_T}|\tilde{f}|^2e^{-\varphi}}\\
			&= \sup_{\substack{\tilde{f}\in A^2(M_T,e^{-\varphi})\\ (\tilde{f}-f)\in J}}\frac{|\xi\cdot f|^2}{\int_{M_T}|\tilde{f}|^2e^{-\varphi}}.
		\end{split}
	\end{flalign*}
	Thus we get that
	\begin{flalign*}
		\begin{split}
			&\sup_{\substack{\xi\in A^2(M_T,e^{-\varphi})^*\setminus\{0\}\\ \xi|_{A^2(M_T,e^{-\varphi})\cap J}\equiv 0}}\frac{|\xi\cdot f|^2}{K^{\varphi}_{\xi,\Psi,\lambda}(T)}\\
			\leq&\inf_{\substack{\tilde{f}\in A^2(M_T,e^{-\varphi})\\ (\tilde{f}-f)\in J}}\int_{M_T}|\tilde{f}|^2e^{-\varphi}\\
			=&C(\Psi,\varphi,J,f,M_T).
		\end{split}
	\end{flalign*}
	
	Since $A^2(M_T,e^{-\varphi})$ is a Hilbert space, and $A^2(M_T,e^{-\varphi})\cap J$ is a closed proper subspace of $A^2(M_T,e^{-\varphi})$, there exists a closed subspace $H$ of $A^2(M_T,e^{-\varphi})$ such that $H=(A^2(M_T,e^{-\varphi})\cap J)^{\bot}\neq\{0\}$. Then for $f\in A^2(M_T,e^{-\varphi})$, we can make the decomposition $f=f_J+f_H$, such that $f_J\in A^2(M_T,e^{-\varphi})\cap J$, and $f_H\in H$. Note that the linear functional $\xi_f$ defined as follows:
	\[\xi_f\cdot g:=\sqrt{-1}^{n^2}\int_{M_T}g\wedge\overline{f_H}e^{-\varphi}, \ \forall g\in A^2(M_T,e^{-\varphi}),\]
	satisfies that $\xi_f\in A^2(M_T,e^{-\varphi})^*\setminus\{0\}$ and $\xi_f|_{A^2(M_T,e^{-\varphi})\cap J}\equiv 0$. Then we have
	\[\sup_{\substack{\xi\in A^2(M_T,e^{-\varphi})^*\setminus\{0\}\\ \xi|_{A^2(M_T,e^{-\varphi})\cap J}\equiv 0}}\frac{|\xi\cdot f|^2}{K^{\varphi}_{\xi,\Psi,\lambda}(T)}\\
	\geq\frac{|\xi_f\cdot f|^2}{K^{\varphi}_{\xi_f,\Psi,\lambda}(T)}.\]
	Besides, we can know that
	\[K^{\varphi}_{\xi_f,\Psi,\lambda}(T)=\sup_{h\in A^2(M_T,e^{-\varphi})}\frac{|\sqrt{-1}^{n^2}\int_{M_T} h\wedge\overline{f_H}e^{-\varphi}|^2}{\int_{M_T}|h|^2e^{-\varphi}}\leq\int_{M_T}|f_H|^2e^{-\varphi},\]
	and
	\[\xi_f\cdot f=\xi_f\cdot (f_J+f_H)=\xi_f\cdot f_H=\int_{M_T}|f_H|^2e^{-\varphi}.\]
	Then we have
	\[\frac{|\xi_f\cdot f|^2}{K^{\varphi}_{\xi_f,\Psi,\lambda}(T)}\geq\int_{M_T}|f_H|^2e^{-\varphi}\geq C(\Psi,\varphi,J,f,M_T),\]
	which implies that
	\[\sup_{\substack{\xi\in A^2(M_T,e^{-\varphi})^*\setminus\{0\}\\ \xi|_{A^2(M_T,e^{-\varphi})\cap J}\equiv 0}}\frac{|\xi\cdot f|^2}{K^{\varphi}_{\xi,\Psi,\lambda}(T)}\\
	\geq C(\Psi,\varphi,J,f,M_T).\]
	
	Lemma \ref{B=C} is proved.
\end{proof}

Let $\xi\in A^2(M_T,e^{-\varphi})^*$, and recall that the Bergman kernel related to $\xi$ is
\[K^{\varphi}_{\xi,\Psi,\lambda}(t):=\sup_{f\in A^2(M_T,e^{-\varphi})}\frac{|\xi\cdot f|^2}{\|f\|^2_{\lambda,t}}\]
for any $t\in [T,+\infty)$ and $\lambda>0$. We state the following Lemma.

\begin{Lemma}\label{upper-semi}
	$K^{\varphi}_{\xi,\Psi,\lambda}(t)$ is upper-semicontinuous with respect to $t\in[T,+\infty)$, i.e., for any sequence $\{t_j\}_{j=1}^{\infty}$ in $[T,+\infty)$ such that $\lim_{j\rightarrow+\infty}t_j=t_0\in [T,+\infty)$, we have
	\[\limsup_{j\rightarrow+\infty}K^{\varphi}_{\xi,\Psi,\lambda}(t_j)\leq K^{\varphi}_{\xi,\Psi,\lambda}(t_0).\]
\end{Lemma}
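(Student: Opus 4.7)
My plan is to pass to a near-extremal subsequence, extract a compactly convergent further subsequence via a uniform $L^2$ bound, and conclude using the two lemmas already established. Let $\{t_j\}\subset [T,+\infty)$ with $t_j\to t_0$. By passing to a subsequence I may assume $K^{\varphi}_{\xi,\Psi,\lambda}(t_j)$ converges to $\limsup_j K^{\varphi}_{\xi,\Psi,\lambda}(t_j)$; if this $\limsup$ is $0$ the conclusion is trivial, so I also assume $K^{\varphi}_{\xi,\Psi,\lambda}(t_j)>0$ throughout. For each $j$ choose $f_j\in A^2(M_T,e^{-\varphi})$ with $\|f_j\|_{\lambda,t_j}=1$ and $|\xi\cdot f_j|^2\geq K^{\varphi}_{\xi,\Psi,\lambda}(t_j)-\tfrac{1}{j}$ (existence by definition of the sup, or exactly by Lemma \ref{sup=max}).

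Next, the two-sided inequality $e^{\lambda(T-t)}\|f\|_T^2\leq\|f\|_{\lambda,t}^2\leq\|f\|_T^2$ stated in Section \ref{main} gives $\|f_j\|_T^2\leq e^{\lambda(t_j-T)}$, which is uniformly bounded since $t_j\to t_0<+\infty$. A standard Montel-type argument (Cauchy estimates on local charts from the uniform weighted $L^2$ bound) then produces a subsequence, which I relabel $\{f_j\}$, compactly convergent on $M_T$ to a holomorphic $(n,0)$ form $f_0$; Fatou's lemma ensures $f_0\in A^2(M_T,e^{-\varphi})$.

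With compact convergence in hand, Lemma \ref{fjtof0} gives $\xi\cdot f_j\to \xi\cdot f_0$, hence $|\xi\cdot f_j|^2\to |\xi\cdot f_0|^2$. Moreover, for each fixed $z\in M_T$ the function $t\mapsto \Psi_{\lambda,t}(z)=\lambda\max\{\Psi(z)+t,0\}$ is continuous in $t$, so $\Psi_{\lambda,t_j}\to \Psi_{\lambda,t_0}$ pointwise on $M_T$; combining this with the pointwise convergence of $|f_j|^2$, Fatou's lemma yields
\[\|f_0\|_{\lambda,t_0}^2 \leq \liminf_j \|f_j\|_{\lambda,t_j}^2 = 1.\]
Therefore
\[K^{\varphi}_{\xi,\Psi,\lambda}(t_0)\geq\frac{|\xi\cdot f_0|^2}{\|f_0\|_{\lambda,t_0}^2}\geq |\xi\cdot f_0|^2=\lim_j|\xi\cdot f_j|^2\geq \lim_j\Bigl(K^{\varphi}_{\xi,\Psi,\lambda}(t_j)-\tfrac{1}{j}\Bigr)=\limsup_j K^{\varphi}_{\xi,\Psi,\lambda}(t_j),\]
as desired.

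The only point requiring any attention is the Fatou step on the weighted $L^2$ norms: it relies on pointwise convergence $\Psi_{\lambda,t_j}\to\Psi_{\lambda,t_0}$ even though $\Psi$ is only plurisubharmonic, and this is exactly what the truncated form $\lambda\max\{\Psi+t,0\}$ makes automatic, since the inner maximum is continuous in $t$ at every fixed point. Everything else is standard $L^2$-compactness machinery, with the continuity of $\xi$ on compactly convergent sequences of uniformly $L^2$-bounded forms being precisely Lemma \ref{fjtof0}; so I do not expect any substantive obstacle beyond this bookkeeping.
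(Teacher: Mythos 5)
Your argument is correct and follows essentially the same route as the paper's proof: extract a limsup-attaining subsequence of near-extremal forms normalized by $\|f_j\|_{\lambda,t_j}=1$, use the uniform bound $\|f_j\|_T^2\leq e^{\lambda(t_j-T)}$ and Montel to get compact convergence, then apply Fatou (with the pointwise continuity of $t\mapsto\lambda\max\{\Psi+t,0\}$) and Lemma \ref{fjtof0}. The only cosmetic difference is that the paper takes exact maximizers via Lemma \ref{sup=max} rather than $(1/j)$-near-extremal elements, which you already note is an available alternative.
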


\begin{proof}
	
	Denote that
	\[K(t):=K^{\varphi}_{\xi,\Psi,\lambda}(t)\]
	for any $t\in [T,+\infty)$. It can be seen that
	\[e^{\lambda(s-t)}\|f\|^2_{\lambda,s}\leq\|f\|_{\lambda,t}^2\leq\|f\|_{\lambda,s}^2\]
	for any $t>s\geq T$ and $f\in A^2(M_T,e^{-\varphi})$. Note that $K(s)=0$ for some $s\geq T$ induces $K(t)=0$ for any $t\geq T$. Then it suffices to prove Lemma \ref{upper-semi} for $K(t_0)\in (0,+\infty)$ and $K(t_j)\in (0,+\infty)$, $\forall j\in\mathbb{N}_+$.
	
	We assume that $\{t_{k_j}\}$ is the subsequence of $\{t_j\}$ such that
	\[\lim_{j\rightarrow+\infty}K(t_{k_j})=\limsup_{j\rightarrow+\infty}K(t_j).\]
	By Lemma \ref{sup=max}, there exists a sequence of holomorphic $(n,0)$ forms $\{f_j\}$ on $M_T$ such that $f_j\in A^2(M_T,e^{-\varphi})$, $\|f_j\|_{\lambda,t_j}=1$, and $|\xi\cdot f_j|^2=K(t_j)$, for any $j\in\mathbb{N}_+$. Since $\{t_j\}$ is bounded in $\mathbb{C}$, there exists some $s_0>T$, such that $t_j<s_0$ for any $j$, which implies that
	\[\int_{M_T}|f_j|^2e^{-\varphi}\leq e^{\lambda (s_0-T)}\|f_j\|^2_{\lambda,t_j}=e^{\lambda (s_0-T)}, \ \forall j\in\mathbb{N}_+.\]
	Then following from Montel's theorem, we can get a subsequence of $\{f_{k_j}\}$ (denoted by $\{f_{k_j}\}$ itself) compactly convergent to a holomorphic $(n,0)$ form $f_0$ on $M_T$. According to Fatou's lemma, we have
	\begin{flalign*}
		\begin{split}
			\|f_0\|_{\lambda,t_0}&=\int_{M_T}|f_0|^2e^{-\varphi-\lambda\max\{\Psi+t_0,0\}}\\
			&=\int_{M_T}\lim_{j\rightarrow+\infty}|f_{k_j}|^2e^{-\varphi-\lambda\max\{\Psi+t_{k_j},0\}}\\
			&\leq\liminf_{j\rightarrow+\infty}\int_{M_T}|f_{k_j}|^2e^{-\varphi-\lambda\max\{\Psi+t_{k_j},0\}}\\
			&=\liminf_{j\rightarrow+\infty}\|f_{k_j}\|_{\lambda,t_j}=1.
		\end{split}
	\end{flalign*}
	Then $\int_{M_T}|f_0|^2e^{-\varphi}\leq e^{\lambda (t_0-T)} \|f_0\|^2_{\lambda,t_0}\leq e^{\lambda (s_0-T)}<+\infty$, which implies that $f_0\in A^2(M_T,e^{-\varphi})$. Lemma \ref{fjtof0} shows that $|\xi\cdot f_0|^2=\lim_{j\rightarrow+\infty}|\xi\cdot f_{k_j}|^2=\limsup_{j\rightarrow+\infty}K(t_j)$. Thus
	\[K(t_0)\geq\frac{|\xi\cdot f_0|^2}{\|f_0\|^2_{\lambda,t_0}}\geq\limsup_{j\rightarrow+\infty}K(t_j),\]
	which means that $K(t)$ is upper semi-continuous with respect to $t\in [T,+\infty)$.
\end{proof}

\section{Proof of Theorem \ref{concavity}}

We prove Theorem \ref{concavity} by using Lemma \ref{L2ext} (optimal $L^2$ extension theorem).

\begin{proof}[Proof of Theorem \ref{concavity}]
	Denote that $\Omega:=M_T\times E_T$. Note that $M_T$ is an $n-$dimensional Stein manifold, and $\Psi$ is a plurisubharmonic function on $M_T$. We get that $\Omega$ is an $(n+1)-$dimensional Stein manifold. Denote that $\pi_1,\pi_2$ are the natural projections from $\Omega$ to $M_T$ and $E_T$. Let
	\[\tilde{\Psi}:=\varphi(z)+\lambda\max\{\Psi(z)+\text{Re\ }w,0\}\]
	for any $(z,w)\in \Omega$ with $z\in M_T$ and $w\in E_T$. Then $\tilde{\Psi}$ is a plurisubharmonic function on $\Omega$.
	
	Denote that
	\[K(w):=K^{\varphi}_{\xi,\Psi,\lambda}(\text{Re\ }w)\]
	for any $w\in E_T$. We prove that $\log K(w)$ is a subharmonic function with respect to $w\in E_T$.
	
	Firstly we prove that $\log K(w)$ is upper semicontinuous. Let $w_j\in E$ such that $\lim_{\lambda\rightarrow+\infty}w_j=w_0\in E_T$. Then $\lim_{j\rightarrow+\infty}\text{Re\ }w_j=\text{Re\ }w_0\in [T,+\infty)$. Following from Lemma \ref{upper-semi}, we get that
	\[\limsup_{j\rightarrow+\infty}\log K(w_j)\leq \log K(w_0).\]
	Thus $\log K(w)$ is upper semincontinuous with respect to $w\in E_T$.
	
	Secondly we prove that $\log K(w)$ satifies the sub-mean value inequality on $E_T$.
	
	Let $w_0\in E_T$, and $\Delta(w_0,r)\subset E_T$ be the disc centered at $w_0$ with radius $r$. Let $\Omega':=M_T\times\Delta(w_0,r)\subset \Omega$ be a submanifold of $\Omega$. Let $f_0\in A^2(M_T,e^{-\varphi})$ such that
	\[K(w_0)=\frac{|\xi\cdot f_0|^2}{\|f_0\|^2_{\lambda,\text{Re\ }w_0}}\]
	by Lemma \ref{sup=max}.
	
	Note that  $\Omega'$ is a Stein manifold, and $\tilde{\Psi}(z,w)=\varphi(z)+\Psi_{\lambda,\text{Re\ }w}=\varphi(z)+\lambda\max\{\Psi(z)+\text{Re\ }w,0\}$ is a plurisubharmonic function on $\Omega'$. Using Lemma \ref{L2ext} (optimal $L^2$ extension theorem), we can get a holomorphic $(n,0)$ form $\tilde{f}$ on $\Omega'$ such that $g(z,w_0)=g_0(z)$ for any $z\in M_T$, where $\tilde{f}=g(z,w)dz\wedge dw$ and $f_0=g(z)dz$ on $(z,w)$, and
	\begin{equation}\label{ineqL2ext}
		\frac{1}{\pi r^2}\int_{\Omega'}\frac{1}{2}|\tilde{f}|^2e^{-\tilde{\Psi}}\leq \int_{M_T}|f_0|^2e^{-\Psi_{\lambda,\text{Re\ }w_0}}.
	\end{equation}
	Denote that $\tilde{f}_w=\frac{\tilde{f}}{dw}|_{M_T\times\{w\}}$. Since the function $y=\log x$ is concave, according to Jensen's inequality and inequality (\ref{ineqL2ext}), we have
	\begin{flalign}\label{ineqJensen}
		\begin{split}
			\log\|f_0\|^2_{\lambda,\text{Re\ }w_0}&=\log\left(\int_{M_T}|f_0|^2e^{-\Psi_{\lambda,\text{Re\ }w_0}}\right)\\
			&\geq\log\left(\frac{1}{\pi r^2}\int_{\Omega'}\frac{1}{2}|\tilde{f}|^2e^{-\tilde{\Psi}}\right)\\
			&=\log\left(\frac{1}{\pi r^2}\int_{\Delta(w_0,r)}\left(\int_{M_T\times\{w\}}|\tilde{f}_w|^2e^{-\Psi_{\lambda,\text{Re\ }w}}\right)d\mu_{\Delta_{w_0,r}}(w)\right)\\
			&\geq\frac{1}{\pi r^2}\int_{\Delta(w_0,r)}\log\left(\|\tilde{f}_w\|^2_{\lambda,\text{Re\ }w}\right)d\mu_{\Delta_{w_0,r}}(w)\\
			&\geq\frac{1}{\pi r^2}\int_{\Delta(w_0,r)}\left(\log|\xi\cdot \tilde{f}_w|^2-\log K(w)|\right)d\mu_{\Delta_{w_0,r}}(w).
		\end{split}
	\end{flalign}
	Where $\mu_{\Delta_{w_0,r}}$ is the Lebesgue measure on $\Delta_{w_0,r}$. It follows from Lemma \ref{xihol} that $\xi\cdot \tilde{f}_w$ is holomorphic with respect to $w$, which implies that $\log|\xi\cdot\tilde{f}_w|^2$ is subharmonic with respect to $w$. Then we have
	\[\log|\xi\cdot f_0|^2\leq\frac{1}{\pi r^2}\int_{\Delta(w_0,r)}\log|\xi\cdot \tilde{f}_w|^2d\mu_{\Delta_{w_0,r}}(w).\]
	Combining with inequality (\ref{ineqJensen}), we get
	\[\log\|f_0\|^2_{\lambda,\text{Re\ }w_0}\geq\log|\xi\cdot f_0|^2-\frac{1}{\pi r^2}\int_{\Delta(w_0,r)}\log K(w)d\mu_{\Delta_{w_0,r}}(w),\]
	which means
	\[\log K(w_0)\leq\frac{1}{\pi r^2}\int_{\Delta(w_0,r)}\log K(w)d\mu_{\Delta_{w_0,r}}(w).\]
	
	Since $\log K(w)$ is upper semicontinuous and satisfies the sub-mean value inequality on $E_T$, we know that $\log K(w)$ is a subharmonic function on $E_T$.
\end{proof}

\section{Proof of Theorem \ref{increasing}}
In this section, we give the proof of Theorem \ref{increasing}. We need the following lemma.

\begin{Lemma}[see \cite{Demaillybook}]\label{Re}
	Let $D=I+\sqrt{-1}\mathbb{R}:=\{z=x+\sqrt{-1}y\in\mathbb{C} : x\in I, y\in\mathbb{R}\}$ be a subset of $\mathbb{C}$, where $I$ is an interval in $\mathbb{R}$. Let $\phi(z)$ be a subharmonic function on $D$ which is only dependent on $x=\text{Re\ }z$. Then $\phi(x):=\phi(x+\sqrt{-1}\mathbb{R})$ is a convex function with respect to $x\in I$.
\end{Lemma}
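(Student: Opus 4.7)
The plan is to reduce to the smooth case by mollification in the complex variable, since for a smooth subharmonic function of $x$ alone the condition $\Delta\phi\geq 0$ collapses to $\phi''(x)\geq 0$, which is the classical one-dimensional convexity criterion.

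First I would fix a nonnegative radial mollifier $\rho\in C_c^\infty(\mathbb{C})$ with $\int_{\mathbb{C}}\rho\,d\mu=1$ and support in the unit disc, set $\rho_\epsilon(z):=\epsilon^{-2}\rho(z/\epsilon)$, and form $\phi_\epsilon:=\phi*\rho_\epsilon$ on the sub-strip $D_\epsilon:=I_\epsilon+\sqrt{-1}\mathbb{R}$, where $I_\epsilon:=\{x\in I:\mathrm{dist}(x,\partial I)>\epsilon\}$. Classical regularization theory for subharmonic functions gives that $\phi_\epsilon\in C^\infty(D_\epsilon)$ is subharmonic and decreases pointwise to $\phi$ as $\epsilon\searrow 0$. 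The key observation is that since $\phi(z-w)=\phi(\mathrm{Re}(z-w))$ is independent of $\mathrm{Im}\,w$, integrating $\rho_\epsilon(w)$ against it produces a function of $\mathrm{Re}\,z$ alone, so $\phi_\epsilon$ inherits the property of depending only on $x$.

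Next, smoothness together with subharmonicity and $y$-independence gives
\[
0\leq\Delta\phi_\epsilon(x)=\partial_x^2\phi_\epsilon(x)+\partial_y^2\phi_\epsilon(x)=\phi_\epsilon''(x),
\]
so $\phi_\epsilon$ is a smooth convex function of $x\in I_\epsilon$. For any $x_1,x_2\in I$ and $s\in[0,1]$, one then chooses $\epsilon$ small enough that $x_1,x_2,sx_1+(1-s)x_2\in I_\epsilon$, applies the convexity inequality
\[
\phi_\epsilon(sx_1+(1-s)x_2)\leq s\phi_\epsilon(x_1)+(1-s)\phi_\epsilon(x_2),
\]
and lets $\epsilon\to 0^+$, using the monotone convergence $\phi_\epsilon\searrow\phi$ on both sides to recover the convexity inequality for $\phi$.

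The main obstacle is largely cosmetic: one must verify that the $y$-translation invariance of $\phi$ is preserved by the radial mollifier (which is essentially a Fubini computation) and that the monotone limit passage works even when $\phi$ may take the value $-\infty$ (the convexity inequality remains valid in the extended sense, and the finiteness of $\phi$ on a dense set of $I$ prevents degeneracy). With these points in place, the whole argument is a one-line reduction to the one-variable fact $\phi''\geq 0 \Rightarrow \phi$ convex.
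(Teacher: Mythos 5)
Your smoothing argument is correct: the paper gives no proof of this lemma (it is quoted from Demailly's book), and your reduction --- radial mollification, which preserves independence of $y=\operatorname{Im}z$, then $0\leq\Delta\phi_\epsilon=\phi_\epsilon''$ on $I_\epsilon$, then monotone passage to the limit --- is exactly the standard argument behind the cited result. The only point worth making explicit is that $\phi\not\equiv-\infty$ on the connected strip, hence is locally integrable, so the convolutions are well defined and decrease to $\phi$; and a posteriori the extended-valued convexity inequality together with local integrability rules out the value $-\infty$ at interior points of $I$, which is the precise content of your closing remark about nondegeneracy.
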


\begin{proof}[Proof of Theorem \ref{increasing}]
	It follows from Theorem \ref{concavity} that $\log K^{\varphi}_{\xi,\Psi,\lambda}(\text{Re\ } w)$ is subharmonic with respect to $w\in (T,+\infty)+\sqrt{-1}\mathbb{R}$. Note that $\log K^{\varphi}_{\xi,\Psi,\lambda}(\text{Re\ } w)$ is only dependent on $\text{Re\ }w$, then following from Lemma \ref{Re}, we get that $\log K^{\varphi}_{\xi,\Psi,\lambda}(t)=\log K^{\varphi}_{\xi,\Psi,\lambda}(t+\sqrt{-1}\mathbb{R})$ is convex with respect to $t\in (T,+\infty)$. Combining with Lemma \ref{upper-semi}, we get that $\log K^{\varphi}_{\xi,\Psi,\lambda}(t)$ is convex with respect to $t\in [T,+\infty)$, which implies that $-\log K^{\varphi}_{\xi,\Psi,\lambda}(t)+t$ is concave with respect to $t\in [T,+\infty)$. Then for any $\xi\in A^2(M_T,e^{-\varphi})^*$ with $\xi|_{A^2(M_T,e^{-\varphi})\cap J }\equiv 0$, to prove that $\log -K^{\varphi}_{\xi,\Psi,\lambda}(t)+t$ is increasing, we only need to prove that $\log -K^{\varphi}_{\xi,\Psi,\lambda}(t)+t$ has a lower bound on $[T,+\infty)$.
	
	Using Lemma \ref{sup=max}, we obtain that there exists $f_t\in A^2(M_T,e^{-\varphi})$ for any $t\in [T,+\infty)$, such that $\xi\cdot f_t=1$ and
	\begin{equation}\label{K=ft}
		K^{\varphi}_{\xi,\Psi,\lambda}(t)=\frac{1}{\|f_t\|_{\lambda,t}^2}.
	\end{equation}
	In addition, according to Lemma \ref{L2mthod}, there exists a holomorphic $(n,0)$ form $\tilde{F}$ on $M_T$ such that
	\begin{equation}\label{tildeF}
		\int_{M_T}|\tilde{F}-(1-b_{t}(\Psi))f_tF^2|^2e^{-\tilde{\varphi}+v_{t}(\Psi)-\Psi}\leq C\int_{M_T}\mathbb{I}_{\{-t-1<\Psi<-t\}}|f_t|^2e^{-\tilde{\varphi}-\Psi},
	\end{equation}
	where
	\[\tilde{\varphi}=\varphi+2\max\{\psi+T,2\log|F|\},\]
	and $C$ is a positive constant. Then it follows from inequality (\ref{tildeF}) that
	\begin{flalign}\label{tildeF2}
		\begin{split}
			&\int_{M_T}|\tilde{F}-(1-b_{t}(\Psi))f_tF^2|^2e^{-\tilde{\varphi}+v_{t}(\Psi)-\Psi}\\
			\leq&C\int_{M_T}\mathbb{I}_{\{-t-1<\Psi<-t\}}|f_t|^2e^{-\varphi-\Psi}\\
			\leq&Ce^{t+1}\int_{\{\Psi<-t\}}|f_t|^2e^{-\varphi}.
		\end{split}
	\end{flalign}
	Denote that $\tilde{F}_t:=\tilde{F}/F^2$ on $M_T$, then $\tilde{F}_t$ is a holomorphic $(n,0)$ form on $M_T$. Note that $|F|^4e^{-\tilde{\varphi}}=e^{-\varphi}$ on $M_T$. Then inequality (\ref{tildeF2}) implies that
	\begin{equation}\label{tildeFt}
		\int_{M_T}|\tilde{F}_t-(1-b_{t}(\Psi))f_t|^2e^{-\varphi+v_{t}(\Psi)-\Psi}\leq Ce^{t+1}\int_{\{\Psi<-t\}}|f_t|^2e^{-\varphi}<+\infty.
	\end{equation}
	According to inequality (\ref{tildeFt}), we can get that $(\tilde{F}_t-f_t)_{z_0}\in \mathcal{O}(K_M)_{z_0}\otimes I(\Psi+\varphi)_{z_0}\subset \mathcal{O}(K_M)_{z_0}\otimes J_{z_0}$, which means that $\xi\cdot \tilde{F}_t=\xi\cdot f_t=1$. Besides, since $v_t(\Psi)\geq \Psi$, we have
	\begin{flalign*}
		\begin{split}
			&\left(\int_{M_T}|\tilde{F}_t-(1-b_{t}(\Psi))f_t|^2e^{-\varphi+v_{t}(\Psi)-\Psi}\right)^{1/2}\\
			\geq&\left(\int_{M_T}|\tilde{F}_t-(1-b_{t}(\Psi))f_t|^2e^{-\varphi}\right)^{1/2}\\
			\geq&\left(\int_{M_T}|\tilde{F}_t|^2e^{-\varphi}\right)^{1/2}-\left(\int_{M_T}|(1-b_{t}(\Psi))f_t|^2e^{-\varphi}\right)^{1/2}\\
			\geq&\left(\int_{M_T}|\tilde{F}_t|^2e^{-\varphi}\right)^{1/2}-\left(\int_{\{\Psi<-t\}}|f_t|^2e^{-\varphi}\right)^{1/2}.
		\end{split}
	\end{flalign*}
	Combining with inequality (\ref{tildeFt}), we have
	\begin{flalign*}
		\begin{split}
			&\int_{M_T}|\tilde{F}_t|^2e^{-\varphi}\\
			\leq&2\int_{M_T}|\tilde{F}_t-(1-b_{t}(\Psi))f_t|^2e^{-\varphi+v_{t}(\Psi)-\Psi}+2\int_{\{\Psi<-t\}}|f_t|^2e^{-\varphi}\\
			\leq&2(Ce^{t+1}+1)\int_{\{\Psi<-t\}}|f_t|^2e^{-\varphi}.
		\end{split}
	\end{flalign*}
	Note that
	\begin{flalign*}
		\begin{split}
			\|f_t\|^2_{\lambda,t}=&\int_{M_T}|f_t|^2e^{-\varphi-\Psi_{\lambda,t}}\\
			=&\int_{\{\Psi<-t\}}|f_t|^2e^{-\varphi}+\int_{\{T>\Psi\geq -t\}}|f_t|^2e^{-\varphi-\lambda(\Psi+t)}\\
			\geq&\int_{\{\Psi<-t\}}|f_t|^2e^{-\varphi}.
		\end{split}
	\end{flalign*}
	Then we have
	\[\int_{M_T}|\tilde{F}_t|^2e^{-\varphi}\leq 2(Ce^{t+1}+1)\|f_t\|_{\lambda,t}^2\leq C_1\frac{e^t}{K^{\varphi}_{\xi,\Psi,\lambda}(t)},\]
	where $C_1$ is a positive constant independent on $t$. In addition, $\xi\cdot \tilde{F}_t=1$ implies that
	\[\int_{M_T}|\tilde{F}_t|^2e^{-\varphi}=\|\tilde{F}_t\|^2_{\lambda,T}\geq (K^{\varphi}_{\xi,\Psi,\lambda}(T))^{-1}.\]
	Then we get that
	\[-\log K^{\varphi}_{\xi,\Psi,\lambda}(t)+t\geq C_2,\ \forall t\in [T,+\infty),\]
	where $C_2:=\log (C_1^{-1}K^{\varphi}_{\xi,\Psi,\lambda}(T))$ is a finite constant. Since $-\log K^{\varphi}_{\xi,\Psi,\lambda}(t)+t$ is concave, we get that $-\log K^{\varphi}_{\xi,\Psi,\lambda}(t)+t$ is increasing with respect to $t\in [T,+\infty)$.
\end{proof}

\section{Proofs of Corollary \ref{L2integral} and Corollary \ref{SOPE}}
In this section, we give the proofs of Corollary \ref{L2integral} and Corollary \ref{SOPE}. Before the proofs, we do some preparations.

Let $\varphi$ be a plurisubharmonic function on $M$, and let $f$ be a holomorphic $(n,0)$ form on $M_{t_0}=\{\Psi<-t_0\}$ for some $t_0\geq T$ such that $f\in A^2(M_{t_0},e^{-\varphi})$. Let $z_0\in M$, and assume that $a_{z_0}^f(\Psi;\varphi)<+\infty$. According to Remark \ref{a>0}, we know that $a_{z_0}^f(\Psi;\varphi)\in (0,+\infty)$.

Let $p>2a_{z_0}^f(\Psi;\varphi)$ and $\lambda>0$. Let $\xi\in A^2(M_{t_0},e^{-\varphi})^*\setminus\{0\}$ satisfying $\xi|_{A^2(M_{t_0},e^{-\varphi})\cap J_p}\equiv 0$, where $J_p:=I(p\Psi+\varphi)_{z_0}$. Denote that
\[K_{\xi,p,\lambda}(t):=\sup_{\tilde{f}\in A^2(M_{t_0},e^{-\varphi})}\frac{|\xi\cdot \tilde{f}|^2}{\|\tilde{f}\|^2_{p,\lambda,t}},\]
where
\[\|\tilde{f}\|_{p,\lambda,t}:=\left(\int_{M_{t_0}}|\tilde{f}|^2e^{-\lambda\max\{p\Psi+t,0\}-\varphi}\right)^{1/2},\]
and $t\in [pt_0,+\infty)$. Note that
\[p\Psi=\min\{p\psi+(2\lceil p\rceil-2p)\log|F|-2\log|F^{\lceil p\rceil}|,-pt_0\}\]
on $\{p\Psi<-pt_0\}$ for any $p>0$, where $\lceil p\rceil :=\min\{m\in\mathbb{Z} : m\geq p\}$. Then definition of $J_p$ shows that $f\in A^2(M_{t_0},e^{-\varphi})\setminus(A^2(M_{t_0},e^{-\varphi})\cap J_p)$, which implies that $A^2(M_{t_0},e^{-\varphi})\cap J_p$ is a proper subspace of $A^2(M_{t_0},e^{-\varphi})$, and $K_{\xi,p,\lambda}(pt_0)\in (0,+\infty)$. Then Theorem \ref{increasing} tells us that $-\log K_{\xi,p,\lambda}(t)+t$ is increasing with respect to $t\in [pt_0,+\infty)$, which implies that
\begin{equation}\label{-logK(t)+t}
	-\log K_{\xi,p,\lambda}(t)+t\geq -\log K_{\xi,p,\lambda}(pt_0)+pt_0, \ \forall t\in [pt_0,+\infty).
\end{equation}

Since $f\in A^2(M_{t_0},e^{-\varphi})$, following from inequality (\ref{-logK(t)+t}), we get that
\[\|f\|^2_{p,\lambda,t}\geq\frac{|\xi\cdot f|^2}{K_{\xi,p,\lambda}(t)}\geq e^{-t+pt_0}\frac{|\xi\cdot f|^2}{K_{\xi,p,\lambda}(pt_0)}, \ \forall t\in [pt_0,+\infty).\]
In addition, since $f\notin A^2(M_{t_0},e^{-\varphi})\cap J_p$, according to Lemma \ref{B=C}, we have
\begin{flalign}\label{f>e^-tC}
	\begin{split}
		\|f\|^2_{p,\lambda,t}&\geq\sup_{\substack{\xi\in A^2(M_{t_0},e^{-\varphi})^*\setminus\{0\}\\ \xi|_{A^2(M_{t_0},e^{-\varphi})\cap J_p}\equiv 0}}e^{-t+pt_0}\frac{|\xi\cdot f|^2}{K_{\xi,p,\lambda}(pt_0)}\\
		&=e^{-t+pt_0}C(p\Psi,\varphi,J_p,f,M_{t_0}), \ \forall t\in [pt_0,+\infty).
	\end{split}
\end{flalign}
Note that for any $t\in [pt_0,+\infty)$,
\begin{equation}\label{f2}
	\|f\|^2_{p,\lambda,t}=\int_{\{p\Psi<-t\}}|f|^2e^{-\varphi}+\int_{\{-pt_0>p\Psi\geq -t\}}|f|^2e^{-\lambda(p\Psi+t)-\varphi}.
\end{equation}
Since for any $\lambda>0$,
\[\int_{\{-pt_0>p\Psi\geq -t\}}|f|^2e^{-\lambda(p\Psi+t)-\varphi}\leq\int_{\{-pt_0>p\Psi\geq -t\}}|f|^2e^{-\varphi}<+\infty,\]
and $\lim_{\lambda\rightarrow+\infty}e^{-\lambda(p\Psi+t)}=0$ on $\{-pt_0>p\Psi\geq -t\}$, according to Lebesgue's dominated convergence theorem, we have
\[\lim_{\lambda\rightarrow+\infty}\int_{\{-pt_0>p\Psi\geq -t\}}|f|^2e^{-\lambda(p\Psi+t)-\varphi}=0.\]
Then equality (\ref{f2}) implies
\begin{equation}
	\lim_{\lambda\rightarrow+\infty}\|f\|^2_{p,\lambda,t}=\int_{\{p\Psi<-t\}}|f|^2e^{-\varphi}, \ \forall t\in [pt_0,+\infty).
\end{equation}
Letting $\lambda\rightarrow+\infty$ in inequality (\ref{f>e^-tC}), we get that for any $t\in [pt_0,+\infty)$,
\begin{equation}\label{f>e^-tC2}
	\int_{\{p\Psi<-t\}}|f|^2e^{-\varphi}\geq e^{-t+pt_0}C(p\Psi,\varphi,J_p,f,M_{t_0}).
\end{equation}

Now we give the proof the Corollary \ref{L2integral}.
\begin{proof}[Proof of Corollary \ref{L2integral}]	
	Note that $J_p\subset I_+(2a_{z_0}^f(\Psi;\varphi)\Psi+\varphi)_{z_0}$ for any $p>2a_{z_0}^f(\Psi;\varphi)$. Then we have
	\[C(p\Psi,\varphi,J_p,f,M_{t_0})\geq C(\Psi,\varphi,I_+(2a_{z_0}^f(\Psi;\varphi)\Psi+\varphi)_{z_0},f,M_{t_0}), \ \forall p>2a_{z_0}^f(\Psi;\varphi).\]
	Since $\int_{M_{t_0}}|f|^2e^{-\varphi}<+\infty$, it follows from Lebesgue's dominated convergence theorem and inequality (\ref{f>e^-tC2}) that
	\begin{flalign}
		\begin{split}
			&\int_{\{2a_{z_0}^f(\Psi;\varphi)\Psi<-t\}}|f|^2e^{-\varphi}\\
			=&\lim_{p\rightarrow 2a_{z_0}^f(\Psi;\varphi)+0}\int_{\{p\Psi<-t\}}|f|^2e^{-\varphi}\\
			\geq&\limsup_{p\rightarrow 2a_{z_0}^f(\Psi;\varphi)+0}e^{-t+pt_0}C(p\Psi,\varphi,J_p,f,M_{t_0})\\
			\geq&e^{-t+2a_{z_0}^f(\Psi;\varphi)t_0}C(\Psi,\varphi,I_+(2a_{z_0}^f(\Psi;\varphi)\Psi+\varphi)_{z_0},f,M_{t_0}),
		\end{split}
	\end{flalign}
	for any $t\in [2a_{z_0}^f(\Psi;\varphi)t_0,+\infty)$. Let $r:=e^{-t/2}$, and we get that Corollary \ref{L2integral} holds.
\end{proof}

In the following we give the proof of Corollary \ref{SOPE}.

\begin{proof}[Proof of Corollary \ref{SOPE}]
	For any $q>2a_{z_0}^f(\Psi;\varphi)$, according to inequality (\ref{f>e^-tC2}), we get that for any $t\in [qt_0,+\infty)$,
	\begin{equation}\label{f>e^-tC2:2}
		\int_{\{q\Psi<-t\}}|f|^2e^{-\varphi}\geq e^{-t+qt_0}C(q\Psi,\varphi,J_q,f,M_{t_0}).
	\end{equation}
	
	It follows from Fubini's Theorem that
	\begin{flalign*}
		\begin{split}
			&\int_{\{\Psi<-t_0\}}|f|^2e^{-\varphi-\Psi}\\
			=&\int_{\{\Psi<-t_0\}}\left(|f|^2e^{-\varphi}\int_0^{e^{-\Psi}}\mathrm{d}s\right)\\
			=&\int_0^{+\infty}\left(\int_{\{\Psi<-t_0\}\cap\{s<e^{-\Psi}\}}|f|^2e^{-\varphi}\right)\mathrm{d}s\\
			=&\int_{-\infty}^{+\infty}\left(\int_{\{q\Psi<-qt\}\cap\{\Psi<-t_0\}}|f|^2e^{-\varphi}\right)e^t\mathrm{d}t.
		\end{split}
	\end{flalign*}
	Inequality (\ref{f>e^-tC2:2}) implies that for any $q>2a_o^f(\Psi;\varphi)$,
	\begin{flalign*}
		\begin{split}
			&\int_{t_0}^{+\infty}\left(\int_{\{q\Psi<-qt\}\cap\{\Psi<-t_0\}}|f|^2e^{-\varphi}\right)e^t\mathrm{d}t\\
			\geq&\int_{t_0}^{+\infty}e^{-qt+qt_0}C(q\Psi,\varphi,J_q,f,M_{t_0})\cdot e^t\mathrm{d}t\\
			=&\frac{1}{q-1}e^{t_0}C(q\Psi,\varphi,J_q,f,M_{t_0}),
		\end{split}
	\end{flalign*}
	and
	\begin{flalign*}
		\begin{split}
			&\int_{-\infty}^{t_0}\left(\int_{\{q\Psi<-qt\}\cap\{\Psi<-t_0\}}|f|^2e^{-\varphi_0}\right)e^t\mathrm{d}t\\
			\geq&\int_{-\infty}^{t_0}C(q\Psi,\varphi,J_q,f,M_{t_0})
			\cdot e^t\mathrm{d}t\\
			=&e^{t_0}C(q\Psi,\varphi,J_q,f,M_{t_0}).
		\end{split}
	\end{flalign*}
	Then we have
	\begin{equation}\label{q/q-1e^t_0}
		\int_{M_{t_0}}|f|^2e^{-\varphi-\Psi}\geq \frac{q}{q-1}e^{t_0}C(q\Psi,\varphi,J_q,f,M_{t_0}).
	\end{equation}
	for any $q>2a_{z_0}^f(\Psi;\varphi)$. Note that $J_q\subset I_+(2a_{z_0}^f(\Psi;\varphi)\Psi+\varphi)_{z_0}$ for any $q>2a_{z_0}^f(\Psi;\varphi)$, which implies
	\[C(q\Psi,\varphi,J_q,f,M_{t_0})\geq C(\Psi,\varphi,I_+(2a_{z_0}^f(\Psi;\varphi)\Psi+\varphi)_{z_0},f,M_{t_0}), \ \forall q>2a_{z_0}^f(\Psi;\varphi).\]
	Then inequality (\ref{q/q-1e^t_0}) induces
	\begin{equation}\label{q/q-1}
		\int_{M_{t_0}}|f|^2e^{-\varphi-\Psi}\geq \frac{q}{q-1}e^{t_0}C(\Psi,\varphi,I_+(2a_{z_0}^f(\Psi;\varphi)\Psi+\varphi)_{z_0},f,M_{t_0}).
	\end{equation}
	Let $q\rightarrow 2a_o^f(\Psi;\varphi_0)+0$, then inequality (\ref{q/q-1}) also holds for $q\geq 2a_o^f(\Psi;\varphi_0)$. Thus if $q>1$ satisfying
	\begin{equation}
		\int_{M_{t_0}}|f|^2e^{-\varphi_0-\Psi}< \frac{q}{q-1}e^{t_0}C(\Psi,\varphi,I_+(2a_{z_0}^f(\Psi;\varphi)\Psi+\varphi)_{z_0},f,M_{t_0}),
	\end{equation}
	we have $q<2a_{z_0}^f(\Psi;\varphi)$, which means that $f_{z_0}\in\mathcal{O}(K_M)_{z_0}\otimes I(q\Psi+\varphi)_{z_0}$. Proof of Corollary \ref{SOPE} is done.
\end{proof}

\section{Appendix}
In the appendix, we show an example for explaining the results we obtain in the present paper.

Let $M=\Delta^2$ be the unit polydisc in $\mathbb{C}^2$ with coordinate $(z,w)$, $\psi=2\log|z|$ a plurisubharmonic function on $\Delta^2$, $F=w^k$ a holomorphic function on $\Delta^2$ for some positive function $k$, and $T=0$. Then we have
\[\Psi(z,w):=\min\{\psi-2\log|F|, 0\}=\min\left\{2\log\left|\frac{z}{w^k}\right|, 0\right\},\]
where $\Psi(z,o):=0$ for any $z\in\Delta$. Then for any $t\in [0,+\infty)$, we have
\[M_t:=\{\Psi<-t\}=\left\{(z,w)\in\Delta\times\Delta^* : |z|<e^{-t/2}|w|^k\right\}.\]
Especially, $M_0=\left\{(z,w)\in\Delta\times\Delta^* : |z|<|w|^k\right\}$. Note that $o$ is a boundary point of $M_0$, and $M_0\cup o$ is a Reinhardt domain. Then any holomorphic function on $M_0$ can be written as its Laurent expansion. Let $f\in\mathcal{O}(M_0)$, and we may write that
\[f(z,w)=\sum_{(j,l)\in\mathbb{Z}^2}c_{j,l}z^jw^l.\]
We can compute that
\[\int_{M_0}|f|^2=\sum_{(j,l)\in\mathbb{Z}^2}|c_{j,l}|^2\int_{M_0}|z|^{2j}|w|^{2l},\]
where
\[\int_{M_0}|z|^{2j}|w|^{2l}=\frac{\pi^2}{(j+1)(jk+k+l+1)},\]
if $j\geq 0$, $jk+k+l\geq 0$, otherwise it equals $+\infty$. Similarly, we can compute that
\[\int_{M_0}|f|^2e^{-\Psi}=\sum_{(j,l)\in\mathbb{Z}^2}|c_{j,l}|^2\int_{M_0}|z|^{2j-2}|w|^{2l+2k},\]
where
\[\int_{M_0}|z|^{2j-2}|w|^{2l+2k}=\frac{\pi^2}{j(jk+k+l+1)},\]
if $j\geq 1$, $jk+k+l\geq 0$, otherwise it equals $+\infty$. Then it can be noted that
\[I(\Psi)_o=\{h(z,w)=\sum_{(j,l)\in\mathbb{Z}^2}a_{j,l}z^jw^l \text{\ near \ } o : a_{j,l}=0, \ \forall (j,l)\in \mathcal{S} \},\]
where $\mathcal{S}:=\{(j,l)\in\mathbb{Z}^2 : j=0$ or $l\leq -(jk+k+1)\}$. Let $\xi\in A^2(M_0)^*$ which is defined as:
\begin{equation}\label{xi=c0-k}
	\xi\cdot f=c_{0,-k}, \ \forall f=\sum_{(j,l)\in\mathbb{Z}^2}c_{j,l}z^jw^l \in A^2(M_0),\end{equation}
where it is easy to see that $\xi$ is linear, continuous, $\xi\neq 0$, and $\xi|_{A^2(M_0)\cap I(\Psi)_o}\equiv 0$.

Let $\varphi\equiv 0$, and $\lambda>1$. Then
\[\Psi_{\lambda,t}:=\lambda\max\{\Psi+t,0\}=\lambda\max\left\{2\log\frac{|z|}{|w|^k}+t,0\right\}.\]
Now we can calculate that for any $t\in [0,+\infty)$, 
\begin{equation*}
	\begin{split}
		K_{\xi,\Psi,\lambda}^{0}(t)&=\left(\int_{M_0}|w^{-k}|^2e^{-\Psi_{\lambda,t}}\right)^{-1}\\
		&=\frac{1}{\pi^2}\cdot\frac{\lambda-1}{\lambda e^{-t}-e^{-\lambda t}},
	\end{split}
\end{equation*}
for $\lambda\neq 1$, and
\[K_{\xi,\Psi,1}^{0}(t)=\frac{1}{\pi^2}e^t,\] 
for $\lambda=1$. Then it can be checked that for any $\lambda>0$, $\log K_{\xi,\Psi,\lambda}^{0}(t)$ is convex with respect to $t\in[0,+\infty)$, and $-\log K_{\xi,\Psi,\lambda}^{0}(t)+t$ increasing on $[0,+\infty)$. These are compatible to Theorem \ref{concavity} and Theorem \ref{increasing}.

Let $g:=w^{-k}\in A^2(M_0)$. Then it can be verified that
\[C(\Psi,0,I_+(\Psi)_o,g,M_0)=\|g\|^2_0=\int_{M_0}|g|^2=\pi^2,\]
where exactly we have
\[C(\Psi,0,I_+(\Psi)_o,g,M_0)=\frac{|\xi\cdot g|^2}{K_{\xi,\Psi,\lambda}^{0}(0)}\]
for the functional $\xi$ described in (\ref{xi=c0-k}), which can be referred to Lemma \ref{B=C}. In addition, it can be computed that $a^g_o(\Psi;0)=\frac{1}{2}$,
and
\[\frac{1}{r^2}\int_{\{\frac{1}{2}\Psi<\log r\}}|g|^2=\pi^2=C(\Psi,0,I_+(\Psi)_o,g,M_0)\]
for any $r\in (0,1]$. This is compatible to Corollary \ref{L2integral}. Similar computations can also be given to Corollary \ref{SOPE} under these assumptions.

\vspace{.1in} {\em Acknowledgements}. We would like to thank Zhitong Mi and Zheng Yuan for checking this paper. The second named author was supported by National Key R\&D Program of China 2021YFA1003100, NSFC-11825101, NSFC-11522101 and NSFC-11431013.

\bibliographystyle{references}
\bibliography{xbib}

\end{document}